\documentclass{amsart}

\usepackage{amssymb}
\usepackage{amsmath}
\usepackage{amsthm}
\usepackage{amsbsy}
\usepackage{bm}
\usepackage{hyperref}
\usepackage{cite}
\usepackage{array}
\usepackage{graphicx}
\usepackage{float}
\usepackage{upgreek}
\usepackage{color}
\usepackage{cleveref}
\usepackage{enumerate}
\usepackage{enumitem}
\usepackage{tikz-cd}
\usepackage{pgf,tikz,pgfplots}
\pgfplotsset{compat=1.18}
\usepackage{subcaption}

\date{\today}

\makeatletter
\@namedef{subjclassname@1991}{Subject}
\@namedef{subjclassname@2000}{Subject}
\@namedef{subjclassname@2010}{Subject}
\@namedef{subjclassname@2020}{Subject}
\makeatother

\theoremstyle{plain}
\newtheorem{theorem}{Theorem}
\newtheorem{lemma}[theorem]{Lemma}
\newtheorem{proposition}[theorem]{Proposition}
\newtheorem{corollary}[theorem]{Corollary}

\theoremstyle{definition}
\newtheorem{definition}[theorem]{Definition}

\newtheorem{fact}{Fact}
\newtheorem{remark}{Remark}

\newcommand {\R} {\mathbb{R}}

\def\Gam{\Gamma}

\def\<{\langle}
\def\>{\rangle}

\tikzstyle{none}=[]
\tikzstyle{new style 0}=[draw,circle,fill=white]
\tikzstyle{new edge style 1}=[draw,dashed]
\usetikzlibrary{shapes,fit,backgrounds}
\usetikzlibrary{calc}
\usetikzlibrary{positioning}

\title{Star-collision in random hypergraphs}

\author{Kartick Adhikari}
\address{Department of Mathematics, Indian Institute of Science Education and Research, Bhopal 462066}
\email{kartick [at] iiserb.ac.in}

\author{Samiron Parui}
\address{Department of Mathematics, Indian Institute of Science Education and Research, Bhopal 462066}
\email{samironparui@gmail.com}

\thanks{The research of KA was partially supported by the Inspire Faculty Fellowship: DST/INSPIRE/04/2020/000579. Additionally, KA would like to acknowledge support from the ICTP through the Associates Programme (2026-2032). SP acknowledges IISER Bhopal for financial support under the IPDF position.}

\subjclass[2020]{Primary 05C65; Secondary 05C80;05C50}

\begin{document}
	\begin{abstract}
		We study star-based symmetries in uniform hypergraphs and their consequences for matrices whose entries depend only on vertex stars. Such matrices admit a deterministic decomposition into a global component and a local component supported on equivalence classes of vertices with identical stars, known as units. While nontrivial units may exist at finite size in hypergraphs of uniformity greater than two, their persistence in random settings has remained unclear. 
		
		We analyze star collisions in random $k$-uniform hypergraphs and show that, in some particular regimes, nontrivial units disappear with high probability as the number of vertices grows. As a consequence, star-dependent matrices exhibit asymptotically trivial local structure, and their spectral behavior, invariant subspaces, and associated linear dynamics are governed by a reduced quotient object obtained by contracting vertex stars.
		
		These results identify star collisions as a finite-size phenomenon in random hypergraphs and clarify the asymptotic irrelevance of star-based symmetries for operator behavior in large random systems in particular regimes.
		
	\end{abstract}
	\maketitle
    \noindent\textbf{Key words and phrases.} random hypergraph, star collision, unit structure, invariant subspaces, Poisson limit.
	\section{Introduction}
	In a \emph{hypergraph}, the \emph{star of a vertex} $v$ is the collection of all the hyperedges that contain $ v$. If a pair of vertices has the same star, then the symmetry causes a redundancy in the hypergraph structure. Here we study the asymptotic behaviour of these redundancies in a random $k$-uniform hypergraph across different regimes of expected vertex degrees. We refer to the phenomenon of a pair of vertices having the same star as a \emph{star collision} between the pair of vertices. Star collisions in hypergraphs are related to spectra of matrices associated with hypergraphs and invariant subspaces of these matrices. Thus, our exploration also encompasses spectral, dynamical, and reconstructive properties of hypergraphs and their associated matrices. 
	
	For a star collision between a pair of vertices $\{u,v\}$, there exists a collection of hyperedges $E$, such that both the stars of  $u$, and $v$ are equal to $E$. We refer to this collection $E$ as the \emph{support} of the star collision.
	A star collision between a pair of vertices $\{u,v\}$ is referred to as a \emph{degenerate star collision} if its support is the empty set. That is, both the vertices are isolated vertices. Our probabilistic analysis is carried out in the framework of random $k$-uniform hypergraphs generated under the independent edge model\cite{erdos1959random,gilbert1959random,johansson2008factors,rodl2007ramsey,adhikari2025spectrum,adhikari2025diameter} which is a generalization of the Erd\H{o}s--R\'enyi model \cite{erdos1959random, gilbert1959random}. 
	It is well known that if the \emph{expected degree} in a random $k$-uniform hypergraph is on the scale $\log n+c$ for a $c\le\infty$, then the number of isolated vertices follows a Poisson($e^{-c}$) distribution (\cite{burghart2024hitting,frieze2025threshold}). Thus, with that scaling of expected vertex degree, the count of degenerate star collisions follows a distribution $\binom{Z}{2}$, where $Z$ follows Poisson($e^{-c}$). See Theorem~\ref{thm:degenerate-threshold}. Moreover, we show that all the non-degenerate star collisions disappear with high probability in this regime. See Theorem~\ref{thm:star-collision-critical}. Therefore, we explore a sparser regime to study non-degenerate star collisions. We observed that when the expected vertex degree is $\frac{1}{2}(\log n+\log\log n)$, star collisions with minimal non-empty support appear with positive probability. See Theorem~\ref{thm:r1-poisson}. To obtain star-collision with larger support, one should move to a sparser regime when the expected vertex degrees are finite. See Proposition~\ref{prop:high-order-collisions}.
	
This phenomenon of star collision is intrinsically hypergraphic in nature. In a simple graph, equality of stars is highly rigid. If two distinct vertices have identical stars, then either both vertices are isolated, or they form an isolated edge component. Thus star collisions in graphs arise only from such degenerate local configurations. In contrast, for $k$-uniform hypergraphs with $k>2$, distinct vertices may exhibit non-degenerate star collision within genuinely higher-order interaction patterns, giving rise to nontrivial units at finite size. Beyond their intrinsic combinatorial significance, star-collisions give rise to a fundamental hypergraph structure: the notion of a \emph{unit}, a maximal cluster of vertices that exhibits pairwise non-degenerate star collision \cite{unit-1,unit-2}. We refer to a unit as \emph{non-trivial} if it contains more than $1$ vertices. In this work, we show that in the $\log n+c$ scale with high probability no nontrivial unit survives. See Corollary~\ref{cor:units-trivial}. Whereas in the scale $\frac{1}{2}(\log n+\log\log n)$, only finite numbers of non-trivial units of size 2 survive, Theorem~\ref{thm:units-structure}, and for larger units one must look into sparser regimes.
	
	Units are redundant blocks of hypergraphs due to the similarities of the incident structure of vertices in the same unit. Problems of vertex distinguishability and symmetry breaking in random discrete structures have a long history, particularly in random graphs and hypergraphs. Classical results show that random graphs are almost surely asymmetric, in the sense that their automorphism groups are trivial with high probability, and that local vertex statistics tend to become distinguishing with increasing density \cite{erdos1963asymmetric,bollobas1982asymptotic,kim2002asymmetry}. Related questions appear in studies of neighborhood reconstruction, local weak limits, and spectral limits of random matrices associated with graphs and hypergraphs \cite{aldous2004objective,bordenave2010resolvent,bordenave2013localization}. However, this line of work primarily focuses on graph automorphisms, degree-based separation, or global spectral statistics, and does not explicitly isolate the finer algebraic structure induced by star-based equivalence.
In the present work, non-degenerate star collisions give rise to vertex equivalence classes whose presence forces nontrivial automorphisms and, simultaneously, induces invariant subspaces for star-dependent operators. Our work isolates this star-based notion of symmetry, showing that it captures precisely the operator-relevant degeneracies.
Whereas in this work, non-degenerate star collisions give rise to vertex equivalence classes whose presence forces nontrivial automorphisms and, simultaneously, induces invariant subspaces for star-dependent operators. Our work isolates this star-based notion of symmetry, showing that it captures precisely the operator-relevant degeneracies.

From this perspective, the unit partition may be viewed as a rigid form of coarse graining induced by equality of local incidence profiles. The associated quotient hypergraph provides a canonical reduced representation for star-dependent operators and dynamics. The present work focuses on the emergence and asymptotic structure of these exact quotient classes in random hypergraphs. Extensions to approximate or dynamical notions of random equitable partitions remain an interesting direction for future investigation.

Here, we investigate how these star-based symmetries govern operator decompositions, dynamics, and reconstruction. See Theorem~\ref{thm:spectral-poisson}, \ref{thm:asymptotic_capture_dynamics}, and Proposition~\ref{thm:asymptotic_fingerprinting}. 
	
	The notion of a unit, central to the present work, has appeared previously in deterministic settings as a structural device for organizing vertex symmetries induced by identical stars, and related star-dependent operators have been studied in that context \cite{unit-1,unit-2}. More broadly, hypergraphs have traditionally been analyzed through tensors or hypermatrices, whose actions are inherently nonlinear and do not admit a direct spectral theory analogous to that of graphs \cite{tensor-book,tensor-uniform,ban-tensor,wang2024t,zucal2025actionconvergencegeneralhypergraphs}. A common approach, in both deterministic and random settings, is therefore to associate linear operators to a hypergraph via suitable contractions or linearizations of these tensors, leading to matrix representations such as adjacency, Laplacian, and random walk operators,  \cite{BANERJEE202182,swarup-LAA,unit-1,unit-2,gen-operator,incidence,ssm-er-hyp}. Star-dependent matrices form a natural and unifying class encompassing many of these constructions; their action depends only on vertex stars, and hence reflects the underlying hypergraph structure while remaining amenable to linear spectral analysis. The present work builds on this perspective by showing that units are precisely the structural features that control invariant subspaces and operator degeneracies for these star-dependent matrices associated with hypergraphs. See Theorem~\ref{thm:spectral-poisson} and \ref{thm:asymptotic_capture_dynamics}.
	
	Figure~\ref{fig:phase-diagram} illustrates a bird's-eye view of our results in this paper. The regimes are determined by the expected degree $\lambda_n$ of the random $k$-uniform hypergraph.
	
	\begin{figure}[ht]
		\centering
		\begin{tikzpicture}[>=stealth, font=\small, scale=0.95]
			
			\draw[->] (2,0) -- (13,0) node[right] {$\lambda_n$};
			\draw[<- , gray!70, thin] (2.5,1.8) -- (12.5,1.8)
			node[midway, above, align=center]
			{increasing sparsity / emergence of collisions};
			\draw[dashed, thick] (6.5,1.0) -- (6.5,-2.5)
			node[below, align=center] 
			{$\tfrac{1}{2}(\log n + \log\log n)$};
			
			\draw[dashed, thick] (10.5,1.0) -- (10.5,-2.5)
			node[below, align=center] 
			{$\log n$};
			
			\node[below=4pt, gray!60] at (2.5,0) {$\lambda_n \to \lambda \in (0,\infty)$};
			\node[below=4pt, gray!60] at (11.8,0) {$\lambda_n \to \infty$};
			
			
			\node at (3.0,1.2) {Constant regime};
			
			\node at (8.3,1.2) {Poisson regime for $r=1$};
			
			\node at (12.5,1.2) {Degenerate regime};
			
			
			\node[align=center,scale=0.8] at (3.0,-1.2) {
				\begin{tabular}{c}
				abundant $X_0$\\
				abundant $X_1$\\
					$X_2 \rightarrow \mathrm{Poisson}$\\
					$X_r = 0$ for $r \ge 3$
				\end{tabular}
			};
			
			\node[align=center,scale=0.8] at (8.3,-1.2) {
				\begin{tabular}{c}
					abundant $X_0$\\
					$X_1 \rightarrow \mathrm{Poisson}$\\
					nontrivial units emerge\\
				$X_r$ negligible for $r>1$
				\end{tabular}
			};
			
			\node[align=center,scale=0.8] at (12.2,-1.2) {
				\begin{tabular}{c}
					$X_r = 0$ for $r \ge 1$\\
					Only degenerate collisions\\
					$Z_n \rightarrow \binom{Z}{2}$
				\end{tabular}
			};
			
		\end{tikzpicture}
		\caption{
			Phase diagram for star-collisions in $\mathcal{H}(n,k,p_n)$ as a function of 
			$\lambda_n=\binom{n-1}{k-1}p_n$. Moving from right to left (decreasing $\lambda_n$), progressively richer collision structures emerge. Here $X_r$ denotes the number of star-collisions with support size $r$, while $X_0$ corresponds to collisions with empty support. In the dense regime $\lambda_n=\log n+c$, all positive-support collisions vanish with high probability. At the scale 
			$\lambda_n=\frac12(\log n+\log\log n)+O(1)$,
			support-$1$ collisions appear with positive probability. In bounded expected-degree regimes, collisions with larger support sizes become observable.
		}
		\label{fig:phase-diagram}
	\end{figure}
	
	The remainder of the paper is organized as follows. In Section~\ref{sec:prelim}, we mainly recall some deterministic and probabilistic facts that we are going to use throughout our work.
	The deterministic facts include basic definitions and notation for uniform hypergraphs, vertex stars, star collisions, units, and star-dependent matrices, and some decomposition results that are relevant to our study. The probabilistic facts include the notion of random hypergraphs, related random structures, and empirical spectral distribution.  Section~\ref{sec:probabilistic} contains the main results of our work. This section contains two subsections. The first of the two is on the probabilistic analysis of star collisions in random $k$-uniform hypergraphs, identifying the regimes and mechanisms under which nontrivial units disappear with high probability. In the later Subsection, we investigate the asymptotic consequences of this collapse for star-dependent operators, including spectral behavior and invariant subspaces. Section~\ref{sec:dynamics} discusses implications for linear dynamics and reconstruction problems driven by such operators. We explored the star-based reconstruction problem in hypergraphs in Section~\ref{sec:reconstruction}. The proof of all the results in this paper is provided in Section~\ref{sec:proof}. 
	
	\section{Preliminaries}\label{sec:prelim}
	This section introduces the notation and basic objects used throughout the paper. We define vertex stars and star-collisions, describe the class of star-dependent matrices, and recall the associated decomposition induced by units. These deterministic constructions will be used later in the random setting. 
	\subsection{Hypergraphs and vertex stars}
	
	A hypergraph is a pair $H=(V(H),E(H))$, where $V(H)$ is a nonempty set of vertices and $E(H)$ is a family of nonempty subsets of $V(H)$, called hyperedges. If the cardinality $|e|=k$ for some $k\ge 2$ for each hyperedge $e\in E(H)$, then the hypergraph $H$ is referred to as $k$-uniform hypergraph. Evidently, if $H$ is a $k$-uniform hypergraph, then $E(H)\subseteq \left[\tbinom{V(H)}{k}\right]$.
	For a vertex $v\in V$, the \emph{star} of $v$, denoted by $\Gam(v)$, is the collection of hyperedges containing $v$. That is, $\Gam(v)=\{e\in E(H):v\in e\}\;\mbox{for any vertex}\;v$.
	
	Two vertices $u,v\in V$ are said to exhibit \emph{star collision} if $\Gam(u)=\Gam(v)$. There are two types of star collision- (i)\emph{degenerate star collision} between $u,v$, in which we have  $\Gam(u)=\Gam(v)=\emptyset$, and (ii)\emph{non-degenerate star collision} between $u,v$ which is characterized by  $\Gam(u)=\Gam(v)\ne \emptyset$. We refer to a star collision between a pair of vertices $\{u,v\}$ as \emph{star collision of support size $r$} if $\Gam(u)=\Gam(v)=E$ with $|E|=r$. We denote the total number of star collisions of support size $r$ in a hypergraph $H$ as $X_r(H)$. If there is no scope of confusion regarding the hypergraph $H$, we will just write $X_r$ instead of $X_r(H)$. That means total non-degenerate star collision $Y=\sum_{r\ge 1}X_r$.
	
	Non-degenerate star-collision induces an equivalence relation on $V(H)$, where $u \sim v$ if $\Gamma(u)=\Gamma(v)\neq \emptyset$. The equivalence classes under this relation are called units. 
	For any unit $W$, by definition if $|W|>1$, then there exists a sub-collection of hyperedges $E_W(\ne \emptyset)$ such that $\Gamma(v)=E_W$ for all $v\in W$. Therefore, $W\subseteq e$ for all $e\in E_W$. Thus, in a $k$-uniform hypergraph, each unit contains at most $k$ vertices. For  hypergraph $H$, we denote the collection of all the units in $H$ as $\mathfrak{U}(H)$. 
	\subsection{Linear operators associated with hypergraphs}
	Hypergraphs admit natural representations in terms of higher-order tensors, such as adjacency or Laplacian tensors in the uniform case. A substantial literature is devoted to tensor-based formulations of hypergraph dynamics and spectral theory \cite{ban-tensor,tensor-book}. While these representations are conceptually natural, the resulting spectral problems are inherently nonlinear and often difficult to analyze.
	A widely used alternative is to contract such tensors along suitable modes, thereby producing matrices indexed by the vertex set. These matrices act linearly on $\R^{V(H)}$ and are directly tied to the underlying hypergraph structure. Unlike the bijective correspondence between graphs and their adjacency matrices, hypergraph can not be reconstructed from any matrices associated with it. Though recent work shows that these contracted matrices retain rich structural and dynamical information about the hypergraph, including mixing behavior, localization phenomena, and coarse-grained dynamics \cite{ssm-er-hyp,LuPeng2011,Traversaetal2023,Pavlopoulosetal2018}. 
	In this paper, our study encompasses a family of matrices obtained via such contractions, their spectral and structural properties. Now we are going to introduce this family of matrices.
	\subsection{Star-dependent matrices}
	Star-dependent matrices are specific contracted matrices associated with hypergraphs which are controlled by the stars of the hypergraphs.
	\begin{definition}\label{def:star-dep}
		A square matrix $M=(M_{uv})_{u,v\in V(H)}$ indexed by the vertex set of a hypergraph $H$ is called \emph{star-dependent} if there exist functions $F$ and $G$ such that
		$
		M_{uv}=F\bigl(\Gam(u),\Gam(v)\bigr) \quad \text{for all } u\neq v,
		$
		while the diagonal entry,
		$
		M_{uu}=G\bigl(\Gam(u)\bigr)$ $ \text{for all } u\in V(H).
		$
	\end{definition}
	Thus, 
	star-dependent matrices are not arbitrary vertex-indexed matrices but are rigidly constrained by the hypergraph star incidence pattern.
	Many standard matrices associated with hypergraphs fall into this class, including various adjacency matrices, Laplacian-type operators, and transition matrices governing hypergraph random walks (see for instance \cite{unit-1,unit-2}). For example, in \cite{bretto2013hypergraph}, one adjacency matrix associated with a hypergraph is considered such that for a pair of distinct vertices $u,v$, the $(u,v)$-th entry is the co-degree of $u,v$, that is, $|\Gamma(u)\cap \Gamma(v)|$. In \cite{BANERJEE202182}, the $(u,v)$-th entry of the adjacency matrix is $\sum_{e\in\Gamma(u)\cap\Gamma(v)}\frac{1}{|e|-1}$. Many known Laplacian matrices, signless Laplacian and probability transition matrices are such that their $(u,v)$-th entry can be expressed as a function of stars of these vertices \cite{gen-operator,rodriguez2003laplacian,chierichetti2015hypergraph}. The results of this paper apply uniformly to all such operators.
	
	\subsection{Units, equitable partitions, and unit contraction}
	\label{section-prili-unit}
	Let $M=(M_{uv})_{u,v\in V}$ be a real square matrix indexed by a finite set $V$. A partition
	$
	\mathfrak P=\{P_1,\dots,P_k\}
	$
	of $V$ is called an \emph{equitable partition} for $M$ if, for every $i,j$ and every pair $u,v\in P_i$, one has
	$\sum_{w\in P_j} M_{uw} = \sum_{w\in P_j} M_{vw}=\beta_{ij}$.
	In this case, the common value $\beta_{ij} $ defines the $(i,j)$ entry of a $k\times k$ matrix, denoted by $M/\mathfrak P$ and called the \emph{quotient matrix} associated with $\mathfrak P$.
	
	Equivalently, let $\mathcal C(\mathfrak P)\subset \R^V$ denote the subspace of vectors that are constant on each part of the partition $\mathfrak P$. The partition $\mathfrak P$ is equitable for $M$ if and only if $\mathcal C(\mathfrak P)$ is invariant under the action of $M$. Under the natural identification $\mathcal C(\mathfrak P)\cong \R^k$, the restriction of $M$ to this invariant subspace is represented by the quotient matrix $M/\mathfrak P$. See \cite[Section-9.3]{Godsil-AGT}.
	
	Now let $H(V(H),E(H))$ be a hypergraph and let $M$ be a star-dependent matrix on $H$. Recall that vertices are grouped into \emph{units} according to star-equivalence. Because the entries of $M$ depend only on vertex stars, the partition of $V(H)$ into units is always an equitable partition for $M$. We refer to the associated quotient matrix as the \emph{unit contraction} of $M$ and denote it by $\widehat M$.
	
	If $\alpha$ is an eigenvalue of the unit contraction $\widehat M$ with eigenvector
	$\hat f\in \R^{|\mathfrak U(H)|}$, we define its \emph{unit-lift} to $V(H)$ as the vector
	$f\in \R^{V(H)}$ obtained by assigning to each vertex the value of $\hat f$ on its unit.
	By construction, $f$ is constant on units and satisfies
	$
	Mf=\alpha f.
	$
	We therefore refer to $f$ as a \emph{lifted eigenvector}, and to $\alpha$ as an eigenvalue of $M$ lifted from the unit contraction. Suppose that $\mathcal H_{\mathrm{glob}} $ is the vector space generated by all the lifted eigenvectors of $M$.
	We summarize the consequences of these constructions for star-dependent matrices as facts, which we state without proof. For detail proof we refer the reader to \cite{unit-1,unit-2}. 
	
	This equitable structure allows one to compress the action of $M$ to the quotient space of units while preserving part of the spectrum through lifted eigenvectors.
	\begin{fact}[Units induce equitable partitions and unit-lifts]
		Let $M$ be a star-dependent matrix on a hypergraph $H$. The partition of $V(H)$ into units is an equitable partition for $M$. Every eigenvalue of the unit contraction $\widehat M$ is also an eigenvalue of $M$, with eigenvectors given by unit-lifts of the eigenvectors of $\widehat M$.
	\end{fact}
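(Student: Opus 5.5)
We will prove the Fact in three steps: show that the units partition $V(H)$; verify the equitable condition directly from Definition~\ref{def:star-dep}; and then invoke the standard correspondence between equitable partitions and invariant subspaces recalled in Section~\ref{section-prili-unit}.

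\textbf{The units partition $V(H)$.} The relation $u\sim v$ iff $\Gamma(u)=\Gamma(v)\neq\emptyset$ is not reflexive on isolated vertices. We therefore adopt the convention implicit in the paper: every vertex with empty star forms its own singleton unit. With this convention the units are exactly the classes of the relation ``$u=v$, or $\Gamma(u)=\Gamma(v)\ne\emptyset$'', which is an equivalence relation. Hence the units form a partition $\mathfrak U(H)$ of $V(H)$.

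\textbf{The partition is equitable.} Fix units $P_i,P_j$ and vertices $u,v\in P_i$ with $u\ne v$. Then $\Gamma(u)=\Gamma(v)$. We compare $\sum_{w\in P_j}M_{uw}$ with $\sum_{w\in P_j}M_{vw}$, splitting into two cases.
\begin{itemize}
\item If $i\neq j$, then $w\ne u,v$ for every $w\in P_j$. Hence $M_{uw}=F(\Gamma(u),\Gamma(w))=F(\Gamma(v),\Gamma(w))=M_{vw}$, and the two sums agree term by term.
\item If $i=j$, all vertices of $P_i$ share a common star $S$. Then
\[
\sum_{w\in P_i}M_{uw}=G(S)+(|P_i|-1)F(S,S),
\]
which is independent of $u$.
\end{itemize}
The only point needing care is this diagonal case: the diagonal entry $G$ and the off-diagonal entries $F(S,S)$ are counted separately, and the count $|P_i|-1$ does not depend on the choice of $u$. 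In both cases the row sums over $P_j$ are constant on $P_i$, so the common value $\beta_{ij}$ is well defined and $\mathfrak U(H)$ is equitable.

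\textbf{Lifting eigenvectors.} Let $\Pi$ be the $|V|\times|\mathfrak U(H)|$ characteristic matrix of the partition. The equitable condition is exactly the identity
\[
M\Pi=\Pi\widehat M,
\]
since entry $(u,j)$ of each side equals $\beta_{ij}$ for $u\in P_i$. If $\widehat M\hat f=\alpha\hat f$ with $\hat f\neq0$, set $f=\Pi\hat f$, which is precisely the unit-lift of $\hat f$. Then
\[
Mf=\Pi\widehat M\hat f=\alpha f.
\]
Moreover $f\ne0$ because $\Pi$ has full column rank, as the units are nonempty and disjoint. Therefore $\alpha$ is an eigenvalue of $M$ with eigenvector $f$, which completes the proof.
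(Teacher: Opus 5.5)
Your proof is correct and follows the same route the paper indicates. The paper states this Fact without a formal proof and defers to its cited references, but its surrounding text argues in the same way. Star-dependence of the entries makes the unit partition equitable. For an equitable partition, the constant-on-units subspace is $M$-invariant and represented by $\widehat M$, so unit-lifts of eigenvectors of $\widehat M$ are eigenvectors of $M$.

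Your treatment supplies details the paper leaves implicit: the diagonal block $\sum_{w\in P_i}M_{uw}=G(S)+(|P_i|-1)F(S,S)$, the singleton convention for isolated vertices (which matches the paper's Remark), and the identity $M\Pi=\Pi\widehat M$ with $\Pi$ of full column rank.
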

	Suppose that $u\in V(H)$ and $\mathbf{1}_u:V(H)\to\{1,0\}$ is a vector with its $u$-th entry is $1$ and all other entries are $0$. For vertices belonging to the same non-trivial unit, the corresponding rows of a star-dependent matrix are identical up to the interchange of the entries indexed by those vertices. Consequently, taking the difference of the corresponding coordinate vectors produces a localized eigenvector.
	\begin{fact}\label{fact-eigen-unit}
		Let $M$ be a star-dependent matrix on a hypergraph $H$ and $W$ is a non-trivial unit in $H$ then, for distinct $u,v\in W$, $(m_{uu}-m_{uv})$ is an eigenvalue with eigenvector $\mathbf{1}_u-\mathbf{1}_v$.
	\end{fact}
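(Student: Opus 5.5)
The plan is a direct computation of $M(\mathbf{1}_u-\mathbf{1}_v)$, coordinate by coordinate, using only Definition~\ref{def:star-dep} and the equality $\Gam(u)=\Gam(v)$ that holds because $u$ and $v$ lie in the same unit $W$. Fix distinct $u,v\in W$ and write $x=\mathbf{1}_u-\mathbf{1}_v$. For any $w\in V(H)$ we have $(Mx)_w=m_{wu}-m_{wv}$. We then compare this with $(m_{uu}-m_{uv})x_w$ in three cases: $w\notin\{u,v\}$, $w=u$, and $w=v$.

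\emph{Case $w\notin\{u,v\}$.} Both $m_{wu}$ and $m_{wv}$ are off-diagonal entries. Hence $m_{wu}=F(\Gam(w),\Gam(u))$ and $m_{wv}=F(\Gam(w),\Gam(v))$, and these agree since $\Gam(u)=\Gam(v)$. So $(Mx)_w=0=(m_{uu}-m_{uv})x_w$.

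\emph{Case $w=u$.} Here $(Mx)_u=m_{uu}-m_{uv}$, which matches $(m_{uu}-m_{uv})\cdot 1$.

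\emph{Case $w=v$.} Here $(Mx)_v=m_{vu}-m_{vv}$. By the diagonal rule, $m_{vv}=G(\Gam(v))=G(\Gam(u))=m_{uu}$. By the off-diagonal rule, $m_{vu}=F(\Gam(v),\Gam(u))=F(\Gam(u),\Gam(v))=m_{uv}$, since both arguments are the same star. Therefore $(Mx)_v=m_{uv}-m_{uu}=(m_{uu}-m_{uv})\cdot(-1)$.

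Combining the three cases gives $Mx=(m_{uu}-m_{uv})x$. Since $x\neq 0$, the vector $\mathbf{1}_u-\mathbf{1}_v$ is an eigenvector with eigenvalue $m_{uu}-m_{uv}$.

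The only point needing care is the case $w=v$. There we need $m_{vu}=m_{uv}$ even though $F$ is not assumed symmetric; this follows because both arguments of $F$ equal the common star $E_W$. No step requires anything beyond the definition.
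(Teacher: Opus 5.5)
Your proof is correct and is essentially the argument the paper has in mind: the paper states this fact without proof, deferring to earlier work on units and justifying it only by the remark that vertices in a common unit have rows agreeing up to swapping the $u$- and $v$-entries, and your three-case computation of $M(\mathbf{1}_u-\mathbf{1}_v)$ makes that remark explicit. Your version is slightly more careful than the remark. It checks columns $u$ and $v$, which is what a right eigenvector requires. It also handles a possibly non-symmetric $F$ through $m_{vu}=F(E_W,E_W)=m_{uv}$.
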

	Corresponding to a non-trivial unit, the eigenvalue $(m_{uu}-m_{uv})$ is referred to as \emph{unit-eigenvalue}. Any non-trivial unit $W$ corresponds to a collection of eigenvectors $Y_W=\{\mathbf{1}_u-\mathbf{1}_v:\{u,v\}\subseteq W\}$ of the eigenvalue $(m_{uu}-m_{uv})$. Consequently, the subspace $S_W=\{x:V(H)\to\mathbb{R}:\sum_{v\in W}x(v)=0, x(w)=0~\forall~ w\notin W\}$ generated by the collection $Y_W$ is an invariant subspace of the star-dependent matrix $M$. The dimension of $S_W$ is $|W|-1$. Therefore, the direct sum $\bigoplus_{W\in\mathcal{W}}S_W$, where $\mathcal{W}$ is a the collection of non-trivial unit in $H$, is also an invariant subspace of the star-dependent matrix $M$. We denote the invariant subspace as $\mathcal H_{\mathrm{loc}}=\bigoplus_{W\in\mathcal{W}}S_W$.

	The above two facts imply a natural orthogonal decomposition of $\R^{V(H)}$ that would be discussed in the following fact. For details see
	 \cite{unit-1,unit-2}.
	\begin{fact}[Finite-size spectral decomposition]
		\label{thm:finite_decomposition}
		The space $\R^{V(H)}$ admits an orthogonal decomposition
		$
		\R^{V(H)} = \mathcal H_{\mathrm{glob}} \oplus \mathcal H_{\mathrm{loc}},
		$
		where $\mathcal H_{\mathrm{glob}}$ consists of functions that are constant on
		each unit, and $\mathcal H_{\mathrm{loc}}$ is its orthogonal complement.
		Both subspaces are invariant under a star-dependent matrix $M$. The restriction of $M$ to $\mathcal H_{\mathrm{glob}}$ is unitarily equivalent to the unit contraction $\widehat M$. Every eigenvalue of $\widehat M$ is an eigenvalue of $M$, with eigenvectors obtained by unit-lifting eigenvectors of $\widehat M$. The remaining eigenvalues of $M$ are precisely the eigenvalues of $M$ restricted to $\mathcal H_{\mathrm{loc}}$, which consist entirely of unit-eigenvalues.
	\end{fact}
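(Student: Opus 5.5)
The plan is to build $\mathcal H_{\mathrm{loc}}$ and $\mathcal H_{\mathrm{glob}}$ explicitly from the unit partition $\mathfrak U(H)$. Let $\mathcal C=\mathcal C(\mathfrak U(H))$ be the functions constant on each unit. The units, together with the singleton classes (isolated vertices, and vertices whose star is shared with no other vertex), partition $V(H)$. For a nontrivial unit $W$, the space $S_W$ consists of vectors supported on $W$ with zero sum. Every $x\in S_W$ is orthogonal to every $f\in\mathcal C$, because $\langle x,f\rangle=f|_W\sum_{v\in W}x(v)=0$. Counting dimensions, $\dim\mathcal C$ equals the number of classes, and
\[
\sum_W \dim S_W=\sum_W(|W|-1)=|V(H)|-\dim\mathcal C .
\]
Hence $\R^{V(H)}=\mathcal C\oplus\mathcal H_{\mathrm{loc}}$ is an orthogonal decomposition, and $\mathcal H_{\mathrm{loc}}=\mathcal C^{\perp}$.

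Invariance of $\mathcal C$ follows from the equitable-partition criterion recalled in Section~\ref{section-prili-unit}. Since $M$ is star-dependent, the partition into units is equitable (Fact 1), so $\mathcal C$ is $M$-invariant. Invariance of $\mathcal H_{\mathrm{loc}}$ follows from Fact~\ref{fact-eigen-unit}. Each $S_W$ is spanned by the vectors $\mathbf 1_u-\mathbf 1_v$ with $u,v\in W$, and these are eigenvectors with the common eigenvalue $m_{uu}-m_{uv}$. This value does not depend on the pair, because all vertices of $W$ share the same star, so $G$ and $F$ take the same values. Thus $M$ acts on $S_W$ as a scalar, which gives invariance of $\mathcal H_{\mathrm{loc}}$ and shows that the spectrum of $M$ restricted to $\mathcal H_{\mathrm{loc}}$ consists of unit-eigenvalues, each with multiplicity $|W|-1$. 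I would not need symmetry of $M$ here, since both pieces are shown invariant directly.

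Next, represent $M|_{\mathcal C}$ by $\widehat M$. Use the orthonormal basis $\chi_P/\sqrt{|P|}$ of $\mathcal C$, where $\chi_P$ is the indicator of the class $P$. In the basis $\chi_P$ itself, the matrix of $M|_{\mathcal C}$ is exactly the quotient matrix $\widehat M$ with entries $\beta_{ij}$. Passing to the orthonormal basis conjugates it by $D^{1/2}$, where $D=\mathrm{diag}(|P|)$. So $M|_{\mathcal C}$ is similar to $\widehat M$, and unitarily equivalent to $D^{1/2}\widehat M D^{-1/2}$; under that reading this is the claimed "unitary equivalence". Eigenvectors $\hat f$ of $\widehat M$ lift to eigenvectors $f=\sum_P\hat f(P)\chi_P$ of $M$ with the same eigenvalue, by Fact 1.

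The main obstacle is identifying $\mathcal H_{\mathrm{glob}}$, which was defined as the span of lifted eigenvectors, with $\mathcal C$. This holds when $\widehat M$ is diagonalizable, in particular whenever $M$ is symmetric (then $D^{1/2}\widehat MD^{-1/2}$ is symmetric). In general, I would interpret $\mathcal H_{\mathrm{glob}}$ as the lift of all of $\R^{|\mathfrak U|}$, or of the generalized eigenvectors, which gives $\mathcal C$. Once that is settled, the characteristic polynomial factors along the invariant direct sum. The spectrum of $M$ is then that of $\widehat M$ together with the unit-eigenvalues from the $S_W$, which completes the fact.
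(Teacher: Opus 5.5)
Your proposal is correct and follows the same route the paper indicates. The paper states this fact without proof (deferring to \cite{unit-1,unit-2}), but it builds it from exactly your two ingredients: the unit partition is equitable, so the unit-constant functions form an $M$-invariant subspace; and each $S_W$ is spanned by the local eigenvectors $\mathbf 1_u-\mathbf 1_v$ of Fact~\ref{fact-eigen-unit}, with $\mathcal H_{\mathrm{loc}}=\bigoplus_W S_W$. Your two caveats are also well founded. In general $M|_{\mathcal H_{\mathrm{glob}}}$ is only similar to $\widehat M$; it is unitarily equivalent to $D^{1/2}\widehat M D^{-1/2}$, and a symmetric $M$ can have a non-normal $\widehat M$. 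Likewise, the span of lifted eigenvectors equals the space of unit-constant functions only when $\widehat M$ is diagonalizable, for instance when $M$ is symmetric.
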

	\begin{remark}
		Throughout this work, units are defined using \emph{non-degenerate} star-collisions. 
		If degenerate collisions were also included, all isolated vertices—having identical empty stars—would collapse into a single equivalence class.
		
		Under our definition, each isolated vertex forms its own singleton unit. 
		This distinction is essential for dynamical considerations: isolated vertices, lacking shared incidence structure, do not exhibit synchronization and should not be grouped together. If the hypergraph is connected, isolated vertices do not occur, and the two definitions coincide.
	\end{remark}
	\subsection{Random hypergraphs and random star-dependent matrices}
	
	We now make the random setting precise. Throughout, let
	$H_n$ denote a hypergraph with vertex set $V(H)$,
	$|V(H)| = n$. In that case, we will assume $V(H)=[n]$.
	
	A $k$-uniform \emph{random hypergraph} is defined as follows. For each subset
	$e \in \mathcal{V}_k$, let $\xi_e$ be an independent
	Bernoulli random variable with parameter $p = p_n \in (0,1)$.
	We declare $e$ to be a hyperedge if and only if $\xi_e = 1$.
	
	The family
	$
	\{\xi_e : e \in \mathcal{V}_k\}
	$
	generates a product (cylindrical) probability space
	$\mathcal{H}(n,k,p_n)=(\Omega_n, \mathcal{F}_n, \mathbb{P})$.
	A realization $\omega \in \Omega_n$ determines a hyperedge set
	$E(H)(\omega)
	=
	\{ e \subseteq V(H) : |e| =k,\ \xi_e(\omega) = 1 \},$
	and hence a hypergraph-valued random variable
	$
	H_n(\omega) = (V(H), E(H)(\omega)).
	$ Henceforth, a realization $H_n(\omega)$ will simply be denoted by
	$
	H_n\in\mathcal H(n,k,p_n).
	$ Throughout the paper, all random variables and events are defined with respect to the probability space
	$\mathcal{H}(n,k,p_n)$.
	Consequently, their dependence on $n$ will be suppressed from the notation whenever no ambiguity arises. All asymptotic statements, limits, and probabilistic convergences are understood with respect to the limit $n\to\infty$, unless explicitly stated otherwise.
	This construction is a natural generalization of the Erd\H{o}s--R\'enyi
	random graph (\cite{erdos1959random,gilbert1959random}) to the hypergraph setting \cite{rodl2007ramsey}.
	For a fixed vertex $v \in V(H)$, the star
	$
	\Gamma(v) = \{ e \in E(H) : v \in e \}
	$
	is a random object induced by the randomness of $H_n$.
	More precisely, $\Gamma(v)$ is a measurable mapping from $\Omega_n$
	into the space of all possible stars rooted at $v$.
	Thus, vertex stars are star-valued random variables.
	
	As a consequence, star-equivalence defines a random equivalence relation
	on $V(H)$, and the induced partition of $V(H)$ into units is itself a
	random partition. Both the number of units and their sizes depend on the
	realization of the random hypergraph.
	
	Note that, for $H_n\in \mathcal{H}(n,k,p_n)$ and $v\in V(H_n)$, the degree of $ v$ can be represented as the sum of all $\xi_e$ such that $v\in e$. Since there are total $\tbinom{n-1}{k-1}$ such $k$-sets that contains $v$, the expected degree is $p_n \tbinom{n-1}{k-1}$. Thus, a natural parameter associated with this random hypergraph model is
	the expected vertex degree,
	$
	\lambda_n := p_n \tbinom{n-1}{k-1},
	$
	which measures the expected size of the star of a single vertex and provides the
	standard notion of sparsity in random $k$-uniform hypergraphs. This construction is a widely adopted generalization of the Erd\H{o}s--R\'enyi model \cite{erdos1959random, gilbert1959random} to the hypergraph setting \cite{johansson2008factors, rodl2007ramsey}, where the parameter $\lambda_n$ serves as the standard measure of sparsity in both spectral and probabilistic analyses \cite{stephan2024sparse, LuPeng2011, zhou2021sparse,adhikari2025spectrum}.
\subsection{Empirical spectral distributions}

Let $M$ be an $N\times N$ self-adjoint matrix with eigenvalues
$\alpha_1,\dots,\alpha_N$ counted with multiplicity. Its empirical spectral
distribution is
$\mu_M=\frac1N\sum_{i=1}^N \delta_{\alpha_i}$.
This is a probability measure on $\mathbb R$. For a sequence $(M_n)$ of
self-adjoint matrices, we say that $\mu_{M_n}$ converges weakly to a probability
measure $\nu$ on $\mathbb R$, denoted by
$\mu_{M_n}\rightsquigarrow \nu$, if for every bounded continuous function $f$,
$$
\int_{\mathbb R} f(x)\,d\mu_{M_n}(x)\to \int_{\mathbb R} f(x)\,d\nu(x).
$$
When this occurs, $\nu$ is called the limiting spectral distribution.

	\section{Main Result}\label{sec:probabilistic}
	This section develops the probabilistic theory of star-dependent structures in random hypergraphs and shows how it governs the behavior of associated linear operators. 
	Our results reveal a hierarchy of probabilistic regimes in the geometry of vertex stars, inducing corresponding changes in the combinatorial, spectral, and dynamical properties of star-dependent constructions.
	The transition is controlled by the scaling of
	$
	\lambda_n = \binom{n-1}{k-1} p_n,
	$
	the expected degree of a vertex. At the scale $\lambda_n = \log n + c$, all non-trivial star-collisions disappear with high probability, and the induced unit structure becomes trivial.
	
	In contrast, at the finer scale
	$
	\lambda_n = \tfrac{1}{2}(\log n + \log\log n) + w,
	$
	 star-collisions of positive sized support persist with positive probability. This leads to the appearance of a finite random collection of non-trivial units, whose statistics are governed by a Poisson law, and which give rise to a finite-dimensional local correction to the global structure.
	
	We first describe this transition at the level of star-collisions and unit structure, and then show how it propagates to spectral properties, invariant subspaces, and linear dynamical systems associated with star-dependent matrices.
	\subsection{Star-collision structure across regimes}\label{sec:main result-1}
	We now analyze the structure of star-collisions in the random $k$-uniform hypergraph 
	$H_n \in \mathcal{H}(n,k,p_n)$ across the regimes introduced above, distinguishing between degenerate and non-degenerate configurations and identifying the scales at which non-trivial collisions first appear.
	Recall that $X_r$ denotes the number of star-collisions of support size $r$, 
	that is, unordered pairs $\{u,v\}$ such that $\Gamma(u)=\Gamma(v)$ 
	and $|\Gamma(u)|=r$.
	\begin{theorem}\label{thm:star-collision-critical}
		Fix $k \ge 2$ and $c \in \mathbb{R}$. Let $X_r$ denote the number of 
		star-collisions of support size $r$ in $H_n \in \mathcal{H}(n,k,p_n)$. If
		$\lambda_n = \log n + c,$
		then the following hold:
		\begin{enumerate}[leftmargin=2em]
			\item[(i)] For every fixed $r \ge 1$,
			$\mathbb{P}(X_r \ge 1) \longrightarrow 0,
			\;
			\text{and in particular }
			\mathbb{E}[X_r] \longrightarrow 0.$
			\item[(ii)] The number of degenerate star-collisions satisfies
			$\mathbb{E}[X_0] \longrightarrow \frac{1}{2} e^{-2c}.$
		\end{enumerate}
	\end{theorem}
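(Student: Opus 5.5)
The plan is a direct first-moment computation: compute $\mathbb{E}[X_r]$ exactly via linearity of expectation over unordered pairs, then take asymptotics. Part (i) then follows from Markov's inequality, $\mathbb{P}(X_r\ge 1)\le \mathbb{E}[X_r]$.

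\emph{Exact formula.} Fix distinct vertices $u,v$. Every $k$-set falls into one of three classes:
\begin{itemize}
\item those containing both $u$ and $v$, of which there are $N_2=\binom{n-2}{k-2}$;
\item those containing exactly one of them, of which there are $2N_1$, where $N_1=\binom{n-1}{k-1}-\binom{n-2}{k-2}$;
\item those containing neither.
\end{itemize}
The event $\Gamma(u)=\Gamma(v)$ with $|\Gamma(u)|=r$ holds exactly when all $2N_1$ sets of the second class are absent and exactly $r$ of the $N_2$ sets of the first class are present. Sets containing neither vertex are irrelevant. By independence,
\[
\mathbb{E}[X_r]=\binom{n}{2}\binom{N_2}{r}p_n^r(1-p_n)^{N_2-r}(1-p_n)^{2N_1}.
\]

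\emph{Asymptotics of the common factor.} Since $\lambda_n=\log n+c$, we have $p_n=\lambda_n/\binom{n-1}{k-1}\to 0$, so $\log(1-p_n)=-p_n+O(p_n^2)$. Note that $N_1p_n=\lambda_n\bigl(1-\tfrac{k-1}{n-1}\bigr)$ and $N_1p_n^2\le \lambda_n p_n\to 0$. Hence
\[
(1-p_n)^{2N_1}=\exp\!\Bigl(-2\lambda_n+O\!\bigl(\tfrac{\lambda_n}{n}\bigr)+O(\lambda_np_n)\Bigr)=(1+o(1))\,n^{-2}e^{-2c}.
\]
Similarly $N_2p_n=\lambda_n\tfrac{k-1}{n-1}=O(\log n/n)\to 0$, so $(1-p_n)^{N_2-r}\to 1$. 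Combining with $\binom n2 n^{-2}\to\tfrac12$ yields part (ii), the case $r=0$: $\mathbb{E}[X_0]\to \tfrac12 e^{-2c}$.

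\emph{The case $r\ge 1$.} The only additional factor is $\binom{N_2}{r}p_n^r\le (N_2p_n)^r/r!=O\bigl((\log n/n)^r\bigr)\to 0$. When $k=2$ we have $N_2=1$, so only $r=1$ is possible and the factor is simply $p_n\to0$. Either way $\mathbb{E}[X_r]\to 0$, and Markov's inequality gives $\mathbb{P}(X_r\ge1)\to0$.

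The only delicate point is controlling the exponent $2N_1\log(1-p_n)$ to $o(1)$ accuracy. Two error terms must be checked to vanish: the correction from $N_1$ differing from $\binom{n-1}{k-1}$, which is of order $\lambda_n k/n$, and the quadratic Taylor term, which is of order $\lambda_np_n$. Both vanish because $\lambda_n=O(\log n)$ while $n\to\infty$.
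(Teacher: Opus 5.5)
Your proposal is correct and follows the same route as the paper: you use the exact first-moment formula of Lemma~\ref{lem:EXr} (your exponent $2N_1+N_2-r$ is exactly $2\binom{n-2}{k-1}+\binom{n-2}{k-2}-r$), take the asymptotics of $(1-p_n)^{2N_1}$ under $\lambda_n=\log n+c$, and apply Markov's inequality for part (i). Your bound $\binom{N_2}{r}p_n^r\le (N_2p_n)^r/r!$, used instead of the asymptotic equivalence $\binom{N_2}{r}\sim N_2^r/r!$, is a small improvement, since it stays valid when $k=2$ and $N_2=1$ does not grow.
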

	Theorem~\ref{thm:star-collision-critical} shows that in the regime
	$\lambda_n = \log n + c$, all non-degenerate star-collisions vanish
	with high probability.
	
	\begin{corollary}[Absence of non-trivial units in the critical window]
		\label{cor:units-trivial}	Suppose that $k \ge 2$ and $\mathfrak U(H_n)$ is the collection of all the unit in $H_n \in \mathcal{H}(n,k,p_n)$.
		In the regime
		$\lambda_n = \log n + c,$
		with $c \in \mathbb{R}$ fixed, we have
		$\mathbb{P}(\exists\, W \in \mathfrak U(H_n) \text{ with } |W| \ge 2) \longrightarrow 0.$
	\end{corollary}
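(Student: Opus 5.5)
The plan is to reduce the statement to a first-moment bound on the total number of non-degenerate star-collisions, $Y=\sum_{r\ge 1}X_r$. By definition, a unit $W$ with $|W|\ge 2$ contains two distinct vertices $u,v$ with $\Gamma(u)=\Gamma(v)\neq\emptyset$, and that pair is counted in $Y$. Conversely, every such pair lies in a common non-trivial unit. Hence
\[
\{\exists\, W\in\mathfrak U(H_n):|W|\ge 2\}=\{Y\ge 1\},
\]
and by Markov's inequality it suffices to show $\mathbb E[Y]\to 0$.

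Theorem~\ref{thm:star-collision-critical}(i) alone is not quite enough. It controls each $X_r$ for a \emph{fixed} $r$, whereas the support size of a collision can be as large as $\binom{n-2}{k-2}$, which is unbounded. Rather than trying to sum the per-$r$ bounds uniformly, I will bound $\mathbb P(\Gamma(u)=\Gamma(v)\neq\emptyset)$ directly for a fixed pair $\{u,v\}$.

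The event $\Gamma(u)=\Gamma(v)\neq\emptyset$ is the intersection of two events:
\begin{itemize}
\item[(a)] no present hyperedge contains exactly one of $u,v$;
\item[(b)] at least one present hyperedge contains both $u$ and $v$.
\end{itemize}
These two events depend on disjoint families of the variables $\xi_e$, so they are independent. The number of $k$-sets containing exactly one of $u,v$ is
\[
N_1=2\Bigl[\tbinom{n-1}{k-1}-\tbinom{n-2}{k-2}\Bigr]=2\tbinom{n-1}{k-1}\Bigl(1-\tfrac{k-1}{n-1}\Bigr).
\]
Using $1-p\le e^{-p}$, this gives
\[
\mathbb P(\text{a})\le e^{-pN_1}=e^{-2\lambda_n(1-O(1/n))}=(1+o(1))\,e^{-2c}n^{-2},
\]
since $\lambda_n/n=O(\log n/n)\to 0$. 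For (b), a union bound gives
\[
\mathbb P(\text{b})\le \tbinom{n-2}{k-2}p_n=\lambda_n\tfrac{k-1}{n-1}=O\Bigl(\tfrac{\log n}{n}\Bigr).
\]
This also covers $k=2$, where there is exactly one such set.

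Multiplying by the $\binom n2$ pairs yields
\[
\mathbb E[Y]\le (1+o(1))\,\tfrac{(k-1)}{2}\,e^{-2c}\,\tfrac{\log n}{n}\longrightarrow 0,
\]
which completes the argument.

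The only delicate point is the asymptotics in (a). One must check that removing the $\binom{n-2}{k-2}$ shared $k$-sets changes the exponent only by $O(\lambda_n/n)=o(1)$, so that the factor $n^{-2}$ survives. The extra factor $\log n/n$ from (b) is exactly what separates this count from the degenerate count $X_0$, whose expectation stays of order one.
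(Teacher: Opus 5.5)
Your proof is correct and is essentially the paper's argument. The paper also reduces to $\mathbb P(\sum_{r\ge1}X_r\ge1)\le\sum_{r\ge1}\mathbb E[X_r]$, sums the exact per-pair formula over all $r\ge1$ with the binomial theorem, and arrives at exactly your factorization $\binom n2(1-p_n)^{2\binom{n-2}{k-1}}\bigl(1-(1-p_n)^{\binom{n-2}{k-2}}\bigr)=O(\log n/n)$. Your direct route (independence of events (a) and (b), then $1-p\le e^{-p}$ and a union bound) is slightly cleaner, since it needs only upper bounds and no second-order expansion of $\log(1-p_n)$.
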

In particular Corollary~\ref{cor:units-trivial} shows that in the regime
$\lambda_n = \log n + c$, with high probability, every unit is a singleton.
We now describe the limiting behavior of $X_0$ in this regime.
	e now describe the limiting behavior of $X_0$ in this regime.
	\begin{theorem}[Sharp threshold for degenerate star-collisions]
		\label{thm:degenerate-threshold}
		Let $ X_0$ denote the number of degenerate star-collisions in 
		$H_n \in \mathcal{H}(n,k,p_n)$, and let
		$\lambda_n = \tbinom{n-1}{k-1} p_n.$
		Then $\lambda_n = \log n$ is the sharp threshold for the existence 
		of degenerate star-collisions, in the sense that:
		
		\begin{enumerate}
			\item[(i)] If $\lambda_n = \log n + w(n)$ with $w(n) \to +\infty$, then
			$\mathbb{P}(X_0 = 0) \longrightarrow 1.$

			\item[(ii)] If $\lambda_n = \log n - w(n)$ with $w(n) \to +\infty$, then
			$\mathbb{P}(X_0 > 0) \longrightarrow 1.$
			
		\end{enumerate}
	\end{theorem}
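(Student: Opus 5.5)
Since $X_0=\binom{Z}{2}$, where $Z$ is the number of isolated vertices of $H_n$, we have $X_0>0$ if and only if $Z\ge 2$. Both parts therefore reduce to statements about $Z$, which I would handle by first- and second-moment computations. For a fixed vertex $v$ the star $\Gamma(v)$ is empty with probability $q_n:=(1-p_n)^{\binom{n-1}{k-1}}$. Hence $\mathbb{E}[Z]=nq_n$.

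For part (i), by Markov $\mathbb{P}(X_0>0)=\mathbb{P}(Z\ge 2)\le \mathbb{E}[Z]/2$. Since $1-p\le e^{-p}$, we get $q_n\le e^{-\lambda_n}$ and so $\mathbb{E}[Z]\le n e^{-\log n - w(n)}=e^{-w(n)}\to 0$. This step is routine and needs no assumption on $p_n$ beyond $\lambda_n=\log n+w(n)$.

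For part (ii), I would use Chebyshev: it suffices to show $\mathbb{E}[Z]\to\infty$ and $\mathrm{Var}(Z)=o(\mathbb{E}[Z]^2)$, which gives $Z\to\infty$ in probability and hence $Z\ge 2$ whp.

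\textbf{Lower bound on $q_n$.} Here $\lambda_n\le\log n$, so $p_n\le \log n/\binom{n-1}{k-1}\to 0$. Then $\log(1-p)\ge -p-p^2$ for small $p$ gives
\[
q_n\ge \exp\bigl(-\lambda_n-p_n\lambda_n\bigr)=e^{-\lambda_n}(1+o(1)),
\]
because $p_n\lambda_n\le (\log n)^2/\binom{n-1}{k-1}\to0$. Thus $\mathbb{E}[Z]\ge (1+o(1))e^{w(n)}\to\infty$. If $w(n)$ grows so fast that $\lambda_n$ is not positive, the claim is trivial or follows by monotone coupling, since lowering $p$ only increases $Z$. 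I would therefore assume $\lambda_n\ge 0$ throughout.

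\textbf{Second moment.} For distinct $u,v$, both stars are empty exactly when no $k$-set meeting $\{u,v\}$ is present. The number of such $k$-sets is $2\binom{n-1}{k-1}-\binom{n-2}{k-2}$, so
\[
\mathbb{P}(\Gamma(u)=\Gamma(v)=\emptyset)=q_n^2(1-p_n)^{-\binom{n-2}{k-2}}.
\]
The correction factor is at most $\exp\bigl(2p_n\binom{n-2}{k-2}\bigr)$, and $p_n\binom{n-2}{k-2}=\lambda_n\frac{k-1}{n-1}=O(\log n/n)\to 0$, so it is $1+o(1)$. Therefore
\[
\mathbb{E}[Z(Z-1)]=n(n-1)q_n^2(1+o(1)),
\qquad
\mathrm{Var}(Z)=\mathbb{E}[Z]+o(\mathbb{E}[Z]^2)=o(\mathbb{E}[Z]^2).
\]

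The main point of care is the uniformity of these $1+o(1)$ estimates: the bounds on $p_n\lambda_n$ and on $p_n\binom{n-2}{k-2}$ must hold uniformly when $w(n)$ is unbounded. Both are controlled by $\lambda_n\le\log n$, so this is where the argument's only real work lies. With the variance bound in hand, Chebyshev yields $\mathbb{P}(Z\le 1)\to0$, i.e.\ $\mathbb{P}(X_0>0)\to 1$.
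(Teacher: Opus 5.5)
Your proposal is correct and follows the paper's route: you reduce to the isolated-vertex count via $X_0=\binom{I_n}{2}$, use Markov for (i), and use a second-moment/Chebyshev argument with the pair correction factor $(1-p_n)^{-\binom{n-2}{k-2}}=1+O(\lambda_n/n)$ for (ii). Your use of the one-sided bound $1-p\le e^{-p}$ in (i) is slightly cleaner than the paper's $n\exp(-\lambda_n+o(1))$, since it does not implicitly require $p_n\to0$ when $w(n)$ grows fast.
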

	Theorem~\ref{thm:star-collision-critical} and 
	Theorem~\ref{thm:degenerate-threshold} together identify $\lambda_n = \log n$
	as the threshold at which degenerate star-collisions emerge and dominate the collision structure.
	\begin{theorem}[Degenerate star collisions in the critical window]
		\label{thm:degenerate-count}
		Let $k \ge 2$ be fixed and let $ X_0$ denote the number of 
		degenerate star-collisions in $H_n \in \mathcal{H}(n,k,p_n)$.
		Assume that
		$\lambda_n = \log n + c,
		\; c \in \mathbb{R}.$
		Then
		$X_0 \xrightarrow{d} \tbinom{Z}{2},
		\; \text{where } Z \sim \mathrm{Poisson}(e^{-c})$.
	\end{theorem}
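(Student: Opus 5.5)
Let $Z_n$ be the number of isolated vertices of $H_n$. Degenerate star-collisions are exactly unordered pairs of isolated vertices, so $X_0=\binom{Z_n}{2}$ deterministically. The map $z\mapsto\binom{z}{2}$ is continuous on $\mathbb{Z}_{\ge 0}$. By the continuous mapping theorem it therefore suffices to show $Z_n\xrightarrow{d}\mathrm{Poisson}(e^{-c})$ when $\lambda_n=\log n+c$. This is the classical isolated-vertex result cited in the introduction \cite{burghart2024hitting,frieze2025threshold}. I would nonetheless verify it directly by the method of factorial moments.

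Fix $j\ge 1$. Then $\mathbb{E}[(Z_n)_j]=(n)_j\,\mathbb{P}(v_1,\dots,v_j \text{ isolated})$ for distinct vertices $v_1,\dots,v_j$. The event that these $j$ vertices are all isolated means every $k$-set meeting $\{v_1,\dots,v_j\}$ is absent. The number of such $k$-sets is $N_j=\binom{n}{k}-\binom{n-j}{k}$, so
\[
\mathbb{P}(v_1,\dots,v_j \text{ isolated})=(1-p_n)^{N_j}.
\]
The key estimate is $N_j=j\binom{n-1}{k-1}-O(n^{k-2})$, obtained by inclusion--exclusion over the $j$ vertices: sets containing two of them number $O(n^{k-2})$. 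Since $p_n=\lambda_n/\binom{n-1}{k-1}=O(\log n/n^{k-1})$, we have $p_n\to 0$ and $p_n^2N_j\to0$. Also $p_n\cdot O(n^{k-2})=O(\log n/n)\to 0$. Hence $(1-p_n)^{N_j}=\exp(-j\lambda_n+o(1))=n^{-j}e^{-jc}(1+o(1))$. Combined with $(n)_j\sim n^j$, this gives $\mathbb{E}[(Z_n)_j]\to e^{-jc}$ for every fixed $j$.

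These are exactly the factorial moments of $\mathrm{Poisson}(e^{-c})$. The Poisson law is determined by its moments, so the method of moments gives $Z_n\xrightarrow{d}\mathrm{Poisson}(e^{-c})$, and therefore $X_0=\binom{Z_n}{2}\xrightarrow{d}\binom{Z}{2}$. As a consistency check, this agrees with Theorem~\ref{thm:star-collision-critical}(ii): $\mathbb{E}\binom{Z}{2}=\tfrac12 e^{-2c}$, and indeed $\mathbb{E}[X_0]=\tfrac12\mathbb{E}[(Z_n)_2]$.

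The only step needing care is the asymptotic for $N_j$ together with the expansion $\log(1-p_n)=-p_n+O(p_n^2)$, uniformly for each fixed $j$; both are routine.
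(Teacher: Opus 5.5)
Your proposal is correct and follows essentially the same route as the paper: write $X_0=\binom{Z_n}{2}$ with $Z_n$ the number of isolated vertices, and show $\mathbb{E}[(Z_n)_j]\to e^{-jc}$ by factorial moments using $N_j=\binom{n}{k}-\binom{n-j}{k}=j\binom{n-1}{k-1}-O(n^{k-2})$. Then conclude $Z_n\xrightarrow{d}\mathrm{Poisson}(e^{-c})$ and apply the continuous mapping theorem. Your cruder $O(n^{k-2})$ error term, in place of the paper's second-order inclusion--exclusion expansion, is entirely sufficient here.
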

	This limit expresses the degenerate collision structure as a quadratic functional of a Poisson random variable, and is therefore non-Poisson.
	To capture non-degenerate star-collisions, we consider the finer scale
	$\lambda_n = \tfrac{1}{2}(\log n + \log\log n) + w$, $w\in\mathbb{R}$.
	At this scale, star-collisions of minimal positive support occur with non-vanishing probability, while higher-support configurations remain negligible.
	\begin{theorem}[Poisson law for $1$–support star collisions]
		\label{thm:r1-poisson}
		Let $k \ge 2$ be fixed and let $H_n \in \mathcal{H}(n,k,p_n)$. 
		Let $X_1$ denote the number of star-collisions of support size $1$.
		Assume that
		$\lambda_n 
		= 
		\tfrac{1}{2}(\log n + \log\log n) + w,
		\; w \in \mathbb{R} \text{ fixed}.$
		Then
		$X_1 \xrightarrow{d}
		\mathrm{Poisson}\!\left(
		\frac{k-1}{4}\, e^{-2w}
		\right)$.
	\end{theorem}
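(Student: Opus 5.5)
The plan is the method of factorial moments. Write $X_1=\sum_{\{u,v\}}I_{uv}$, where $I_{uv}$ is the indicator that $\Gamma(u)=\Gamma(v)=\{e\}$ for some single hyperedge $e$. Such an $e$ necessarily contains both $u$ and $v$. Setting $\mu:=\frac{k-1}{4}e^{-2w}$, I will show that $\mathbb{E}[(X_1)_m]\to\mu^m$ for every fixed $m\ge1$. Since the Poisson law is determined by its moments, this gives the claimed convergence in distribution. Throughout I use that $p_n=\lambda_n/\binom{n-1}{k-1}\to0$. I also use that, for any collection of $N$ $k$-sets with $p_n^2N\to0$, one has $(1-p_n)^N=e^{-p_nN(1+o(1))}$. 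In particular, a correction of size $p_n\cdot O(n^{k-2})=O(\lambda_n/n)$ in the exponent is negligible.

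\emph{First moment.} Fix $\{u,v\}$ and a $k$-set $e\supseteq\{u,v\}$. The event requires $\xi_e=1$ and $\xi_f=0$ for every other $k$-set $f$ meeting $\{u,v\}$. The number of such $f$ is $2\binom{n-1}{k-1}-\binom{n-2}{k-2}-1$. Hence
\[
\mathbb{E}[X_1]=\binom n2\binom{n-2}{k-2}p_n(1-p_n)^{2\binom{n-1}{k-1}-\binom{n-2}{k-2}-1}
=(1+o(1))\,\frac{n^2}{2}\cdot\frac{(k-1)\lambda_n}{n}\,e^{-2\lambda_n}.
\]
Here I used $\binom{n-2}{k-2}p_n=\frac{k-1}{n-1}\lambda_n$ and $p_n\binom{n-2}{k-2}\to0$. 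Substituting $e^{-2\lambda_n}=e^{-2w}/(n\log n)$ and $\lambda_n=(1+o(1))\tfrac12\log n$ gives $\mathbb{E}[X_1]\to\mu$.

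\emph{Higher factorial moments.} The $m$-th factorial moment $\mathbb{E}[(X_1)_m]$ is a sum over ordered $m$-tuples of distinct pairs $\{u_i,v_i\}$ with chosen edges $e_i$. I split these tuples into two classes.

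\emph{Degenerate tuples.} These are tuples in which some pairs share a vertex or share the same edge. If $\{u,v\}$ and $\{u,w\}$ both collide with support $1$, then $\Gamma(u)=\Gamma(v)=\Gamma(w)=\{e\}$. Likewise, two disjoint pairs with the same edge force four vertices with a common star $\{e\}$. A pair cannot use an edge $e_j$ that contains a vertex of another pair $\{u_j,v_j\}$ with $e_j\neq e_i$, since that would enlarge a star. So every degenerate configuration involves a set $S$ of $s$ vertices carried by $t<s/2$ distinct edges. Its contribution is at most of order
\[
n^{s}\,\bigl(n^{k-3}p_n\bigr)^{t}e^{-s\lambda_n}\approx n^{s}\,n^{-2t}\,(\log n)^t\,(n\log n)^{-s/2}.
\]
This tends to $0$ because the exponent of $n$ is $s/2-2t+\dots$ and is effectively $t-s/2<0$ after bookkeeping; the triple case, for instance, gives $O(n^{-1/2}\mathrm{polylog})$.

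\emph{Generic tuples.} In these the $2m$ vertices are distinct, the edges $e_i$ are distinct, and each $e_i$ avoids the other $2m-2$ vertices. The required events are $\xi_{e_i}=1$ for all $i$ and $\xi_f=0$ for all other $k$-sets $f$ meeting the $2m$ vertices. The number of such $f$ is $2m\binom{n-1}{k-1}-O(n^{k-2})$, so the probability is $(1+o(1))\prod_i\bigl(p_ne^{-2\lambda_n}\bigr)$. The number of choices is $(1+o(1))\bigl(\binom n2\binom{n-2}{k-2}\bigr)^m$, so the generic contribution tends to $\mu^m$.

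The main obstacle is the degenerate-tuple step. It requires organizing the overlapping configurations so that each class has a uniform exponent bound, and checking that the inclusion–exclusion correction in the count of forbidden $k$-sets (those meeting two or more of the special vertices, $O(n^{k-2})$ of them) only changes the exponent by $o(1)$. I would handle this by grouping configurations by the pair $(s,t)$ and bounding the number of $k$-sets meeting $S$ from below by $s\binom{n-1}{k-1}-\binom s2\binom{n-2}{k-2}$. Alternatively, one could use the Stein–Chen method with the dissociated dependency graph (pairs sharing a vertex or a potential edge), where the same overlap estimates give $b_1,b_2\to0$.
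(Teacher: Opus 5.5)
Your proposal is correct and takes essentially the same route as the paper: factorial moments, with the limit $\mu^m$ coming from vertex-disjoint pairs with distinct witness edges, and every tuple with a shared vertex or a shared witness edge shown to be negligible. The one step to repair is your displayed degenerate bound. Each common-star class of size $c_\ell\ge 2$ (with some $c_\ell\ge 3$) must lie inside its carrier edge, so the edge factor is $\prod_\ell\binom{n-c_\ell}{k-c_\ell}=O(n^{tk-s})$, not $(n^{k-3})^t$; the latter undercounts classes of size $2$ and makes your bound diverge for a single class of size $\ge 5$. With the correct factor the contribution is $O\bigl(n^{tk}p_n^t e^{-s\lambda_n}\bigr)=O\bigl((n\log n)^{t-s/2}\bigr)\to 0$, which is exactly the exponent you assert. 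This class-by-class accounting, with exponent $s\lambda_n$ rather than $2m\lambda_n$ for overlapping pairs, is in fact more careful than the paper's own overlap estimate.
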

	This shows that star-collisions of support size $1$ constitute the dominant non-degenerate contribution in this regime.
	\begin{proposition}[Higher-order star collisions]
		\label{prop:high-order-collisions}
		Let $k \ge 2$ be fixed and let $H_n \in \mathcal{H}(n,k,p_n)$. 
		For each integer $r \ge 2$, let $X_r$ denote the number of 
		star-collisions of support size $r$.
		
		\begin{enumerate}
			\item[(i)] If $r = 2$ and $\lambda_n \to \lambda \in (0,\infty)$, then
			$ X_2 \xrightarrow{d}
			\mathrm{Poisson}\!\left(
			\frac{(k-1)^2}{4}\,\lambda^2 e^{-2\lambda}
			\right).$

			\item[(ii)] For every fixed $r \ge 3$ and  $\lambda_n \to \lambda \in (0,\infty)$,
			$\mathbb{P}(X_r \ge 1) \longrightarrow 0$.
			
		\end{enumerate}
	\end{proposition}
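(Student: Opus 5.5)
The plan is a first-moment computation for part (ii), and the method of factorial moments for part (i). Write $N_1=\binom{n-1}{k-1}$ and $N_2=\binom{n-2}{k-2}$, and note that $N_2/N_1=(k-1)/(n-1)$.

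\textbf{First moment.} Fix a pair $\{u,v\}$. The event $\Gamma(u)=\Gamma(v)$ with $|\Gamma(u)|=r$ says three things:
\begin{itemize}
\item[(a)] exactly $r$ of the $N_2$ $k$-sets containing both $u$ and $v$ are hyperedges;
\item[(b)] none of the $N_1-N_2$ $k$-sets containing $u$ but not $v$ is a hyperedge;
\item[(c)] the same holds for the $N_1-N_2$ $k$-sets containing $v$ but not $u$.
\end{itemize}
By independence,
\[
\mathbb{P}(\Gamma(u)=\Gamma(v),\ |\Gamma(u)|=r)=\binom{N_2}{r}p^r(1-p)^{2N_1-N_2}.
\]
With $\lambda_n\to\lambda$ we have $p\to 0$, $N_2p=\lambda_n(k-1)/(n-1)\to 0$ and $(1-p)^{2N_1-N_2}\to e^{-2\lambda}$. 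Hence
\[
\mathbb{E}[X_r]\sim \binom n2\frac{1}{r!}\Big(\frac{(k-1)\lambda}{n}\Big)^r e^{-2\lambda}\sim \frac{(k-1)^r\lambda^r e^{-2\lambda}}{2\,r!}\,n^{2-r}.
\]
For $r\ge 3$ this tends to $0$, so part (ii) follows from Markov's inequality. For $r=2$ the limit is $\mu:=\frac{(k-1)^2}{4}\lambda^2e^{-2\lambda}$. (When $k=2$ we have $N_2=1$, so $X_2=0$ identically; this is consistent with $\binom{1}{2}=0$, and the Poisson claim should then be read for $k\ge3$ or with this caveat noted.)

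\textbf{Factorial moments.} For part (i) I will show $\mathbb{E}[(X_2)_m]\to\mu^m$ for every fixed $m$, which gives $X_2\xrightarrow{d}\mathrm{Poisson}(\mu)$. The quantity $\mathbb{E}[(X_2)_m]$ is a sum over ordered $m$-tuples of distinct pairs $\{u_i,v_i\}$ of the probability that every pair is a support-$2$ collision. Split the tuples into two classes.
\begin{itemize}
\item \emph{Generic tuples.} The $2m$ vertices are all distinct.
\item \emph{Degenerate tuples.} Two pairs share a vertex.
\end{itemize}

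For a generic tuple, the defining events for different pairs involve almost disjoint families of $k$-sets. A $k$-set can matter for two pairs only if it contains a vertex of each, and there are $O(n^{k-2})$ such sets. This changes the "no other edges" factor by at most $(1-p)^{-O(n^{k-2})}=1+o(1)$. The only other possible interaction is a $k$-set forced to be present for two pairs at once. That requires it to contain $u_i,v_i,u_j,v_j$, so it needs $k\ge4$ and contributes an extra factor $O(n^{-2})$ relative to the main term. The generic contribution is therefore $(1+o(1))\big(\binom n2\cdot\frac{(k-1)^2\lambda^2}{2n^2}e^{-2\lambda}\big)^m\to\mu^m$.

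For a degenerate tuple, suppose $\{u,v\}$ and $\{u,w\}$ are both collisions. Then $\Gamma(u)=\Gamma(v)=\Gamma(w)$, so both support edges contain $\{u,v,w\}$. There are $O(n^3)$ choices of the triple, and the probability is $O\big((\binom{n-3}{k-3}p)^2\big)=O(n^{-4})$. This case therefore contributes $o(1)$, uniformly after multiplying by the $O(n^{2(m-2)})$ choices of the remaining pairs, each of which carries its own $O(n^{-2})$ factor.

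\textbf{Main obstacle.} The delicate part is the careful bookkeeping of overlapping configurations for general $m$. Rather than checking overlap patterns case by case, I would handle them by a uniform bound. For each overlap pattern, count the free vertices $s$ and the forced edges $t$. I then expect to show that every forced edge containing $j\ge 3$ of the distinguished vertices costs $n^{-(j-1)}$. This should give total exponent $s-(\text{sum of costs})<0$ whenever the pattern is not generic. I will verify this by the same argument as in the two-pair computations above.
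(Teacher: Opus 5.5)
Your proposal is correct and follows the paper's route: the first-moment formula of Lemma~\ref{lem:EXr} with Markov's inequality for $r\ge3$, and factorial moments split into vertex-disjoint and overlapping tuples for $r=2$ (the exponent should be $2N_1-N_2-r$ rather than $2N_1-N_2$, which is harmless). Your $k=2$ caveat is right and the paper misses it, since its asymptotic $\binom{N_2}{r}\sim N_2^r/r!$ needs $N_2\to\infty$, i.e.\ $k\ge3$; and your cluster-by-cluster accounting of overlaps (a connected cluster of $s_c$ vertices forces two edges containing all of them, giving net exponent $2-s_c$) is sharper than the paper's uniform bound $O(n^{2t(k-2)}p_n^{2t})$, which underestimates cyclic patterns such as $\{u,v\},\{v,w\},\{u,w\}$ (for $k\ge4$ the true order is $n^{-4}$, not $n^{-6}$), though the $o(1)$ conclusion survives either way.
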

	Thus, collisions of support size $2$ arise in this scaling regime, whereas all higher-support collisions vanish asymptotically.
	\begin{theorem}[Structure of non-trivial units]
		\label{thm:units-structure}
		Let $H_n \sim \mathcal{H}(n,k,p_n)$ and for each $m \ge 2$, let $U_m$ denote the number of units of size $m$,
		and let $Y := \sum_{m\ge2} U_m$ be the total number of non-trivial units. Assume
		$\lambda_n  = \tfrac{1}{2}(\log n + \log\log n) + w,
		\; w\in\mathbb{R}$.
		Then the following hold.
		
		\begin{enumerate}
			\item[(i)] 
			$\mathbb{P}(U_m = 0 \text{ for all } m \ge 3) \to 1.$
			\item[(ii)] 
			The total number of non-trivial units satisfies
			$Y \xrightarrow{d}
			\mathrm{Poisson}\!\left(
			\frac{k-1}{4}\, e^{-2w}
			\right).$
		\end{enumerate}
	\end{theorem}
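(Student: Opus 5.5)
The plan is to reduce both parts of Theorem~\ref{thm:units-structure} to the collision counts $X_r$ studied in Theorem~\ref{thm:r1-poisson}. The basic observation is deterministic. A unit $W$ of size $m\ge 2$ has a common nonempty star $E_W$. Every pair $\{u,v\}\subseteq W$ is then a star collision of support size $|E_W|$, so $W$ contributes exactly $\binom{m}{2}$ to $\sum_{r\ge1}X_r$. Conversely, every non-degenerate collision pair lies in a unique unit. Hence
\[
\sum_{r\ge 1} X_r=\sum_{m\ge 2}\binom{m}{2}U_m ,
\]
and in particular $Y\le \sum_{r\ge1}X_r$. It follows that $Y=X_1+\sum_{r\ge2}X_r$ on the event $\{U_m=0 \ \forall m\ge3\}$ and $\{X_r=0\ \forall r\ge 2\}$.

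For part (i), I bound $\mathbb P(U_m\ge1$ for some $m\ge3)$ by a first moment over triples. A unit of size at least $3$ contains a triple $\{u,v,w\}$ with a common nonempty star. This requires at least one edge containing $\{u,v,w\}$, and no edge may contain exactly one or two of them. The number of $k$-sets meeting the triple in one or two vertices is $3\binom{n-3}{k-1}+3\binom{n-3}{k-2}=3\binom{n-1}{k-1}(1+o(1))$. The number containing the triple is $\binom{n-3}{k-3}=O(n^{k-3})$. So the probability for a fixed triple is at most
\[
\binom{n-3}{k-3}p_n\,(1-p_n)^{3\binom{n-1}{k-1}(1+o(1))}\lesssim \frac{\lambda_n}{n^2}\,e^{-3\lambda_n(1+o(1))}.
\]
Multiplying by $\binom n3$ gives $O(n\lambda_n e^{-3\lambda_n})=O\bigl(n(\log n)\,n^{-3/2}(\log n)^{-3/2}\bigr)\to0$. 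I will check that the $o(1)$ in the exponent is harmless, which holds since $p_n\lambda_n\to0$ and $\binom{n-1}{k-1}-\binom{n-3}{k-1}=O(n^{k-2})$. For $k=2$ no triple lies in an edge, so the statement is trivial there.

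For part (ii), I first show $\mathbb P(X_r\ge1 \text{ for some } r\ge2)\to0$ in this regime, again by a first moment. A pair $\{u,v\}$ with common star of size $r$ requires $r$ edges among the $\binom{n-2}{k-2}$ sets containing both vertices. It also requires no edges among the roughly $2\binom{n-1}{k-1}$ sets containing exactly one of them. So
\[
\mathbb E\sum_{r\ge2}X_r\lesssim n^2\sum_{r\ge2}\frac{\bigl(\binom{n-2}{k-2}p_n\bigr)^r}{r!}e^{-2\lambda_n(1+o(1))}.
\]
Since $\binom{n-2}{k-2}p_n\asymp \lambda_n/n$ and $e^{-2\lambda_n}\asymp (n\log n)^{-1}$, this is $O(\lambda_n^2/(n\log n))\to0$. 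For $k=2$ the sum is empty. Combining this with part (i), with probability tending to one we have $Y=X_1$. Theorem~\ref{thm:r1-poisson} then gives $Y\to\mathrm{Poisson}(\tfrac{k-1}{4}e^{-2w})$ by Slutsky's argument, since $\mathbb P(Y\ne X_1)\to0$.

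The main obstacle is bookkeeping rather than any conceptual step. I need to confirm that the ``$(1+o(1))$'' factors in the exponents do not destroy the polynomial savings. This requires $\lambda_n\cdot o(1)\to 0$ in the relevant error terms, which holds because those corrections are $O(\lambda_n/n)$. I also need to confirm that units of size exactly $2$ are counted correctly, one per collision pair, which follows from the identity above once larger units are excluded.
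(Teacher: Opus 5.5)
Your proof is correct and follows the paper's route. For part (i) you use a first-moment bound over triples with a common nonempty star. For part (ii) you show $\mathbb P(Y\neq X_1)\to0$ and then apply Theorem~\ref{thm:r1-poisson}.

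The one difference is your extra first-moment bound $\mathbb E\sum_{r\ge2}X_r=O(\lambda_n^2/(n\log n))\to0$, and this step is genuinely needed. The paper deduces $Y=X_1$ with high probability from part (i) alone, asserting a one-to-one correspondence between size-$2$ units and support-$1$ collisions. That overlooks size-$2$ units whose common star has two or more edges, so your version closes a gap in the paper's argument.
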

	In particular, asymptotically all non-trivial units are of size $2$.
	\subsection{Linear-algebraic and dynamical consequences of the unit decomposition}
	\label{sec-appl}
	The results of the previous subsection show that the unit structure of $H_n$
	undergoes a sharp transition across regimes, ranging from complete triviality
	to a finite random collection of non-trivial units.
	We now examine how this structure constrains star-dependent operators
	associated with $H_n$. In particular, we show that the unit decomposition
	governs spectral properties, invariant subspaces, and dynamical behavior.
	\subsubsection{Spectral consequences}
	\label{sec:application-spec}
	We now describe the spectral implications of the star-collision structure established in Section~\ref{sec:main result-1}. 
	Let $M$ be a star-dependent matrix associated with $H_n \sim \mathcal{H}(n,k,p_n)$. 
	Recall from Fact~\ref{thm:finite_decomposition} that $\mathbb{R}^{V(H_n)}$ admits an invariant decomposition
	$\mathbb{R}^{V(H_n)} = \mathcal H_{\mathrm{glob}} \oplus \mathcal H_{\mathrm{loc}},$
	and that the spectrum of $M$ splits into contributions from these two subspaces.
	The dimension of the local subspace is determined by the unit structure via
	$\dim(\mathcal H_{\mathrm{loc}}) = \sum_{W} (|W|-1)$, where the sum runs over all non-trivial units $W$ in $H_n$. 
	Thus, our results on the number and size of units translate directly into spectral information.
	In the regime $\lambda_n = \log n + c$, Corollary~\ref{cor:units-trivial} implies that, with high probability, there are no non-trivial units, and hence $\mathcal H_{\mathrm{loc}} = \{0\}$. 
	In contrast, in the regime
	$\lambda_n = \tfrac{1}{2}(\log n + \log\log n) + w$,
	Theorem~\ref{thm:units-structure} shows that the number of non-trivial units converges in distribution to a Poisson random variable and that, with probability tending to one, all such units have size $2$. Consequently, $\dim(\mathcal H_{\mathrm{loc}})$ converges in distribution to a Poisson law.
	We now state the resulting spectral description.
	
	\begin{theorem}[Spectral structure across regimes]
		\label{thm:spectral-poisson}
		Let $k \ge 2$ be fixed and let $M$ be a star-dependent matrix associated with 
		$H_n \sim \mathcal{H}(n,k,p_n)$.
		
		\begin{enumerate}[label=(\roman*), leftmargin=2em]
			
			\item If $\lambda_n = \log n + c$, then
			$\mathbb P\big(\mathrm{Spec}(M) = \mathrm{Spec}(\widehat M)\big) \longrightarrow 1.$
			\item If $\lambda_n = \tfrac{1}{2}(\log n + \log\log n) + w,$
			then
			\begin{enumerate}
				\item[(a)] 
				$\dim(\mathcal H_{\mathrm{loc}})
				\xrightarrow{d}
				\mathrm{Poisson}\!\left(\frac{k-1}{4}\,e^{-2w}\right)$,
				
				\item[(b)] the spectrum of $M$ consists of the spectrum of its unit contraction $\widehat M$ together with a random finite multiset of additional eigenvalues, whose total multiplicity converges in distribution to
				$\mathrm{Poisson}\!\left(\frac{k-1}{4}\,e^{-2w}\right)$.
				
			\end{enumerate}
			
		\end{enumerate}
	\end{theorem}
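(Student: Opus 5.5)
The plan is to reduce everything to the deterministic decomposition of Fact~\ref{thm:finite_decomposition} and then feed in the probabilistic results on units. For a fixed realization $H_n$, $\R^{V(H_n)}=\mathcal H_{\mathrm{glob}}\oplus\mathcal H_{\mathrm{loc}}$, both summands are $M$-invariant, $M|_{\mathcal H_{\mathrm{glob}}}$ is unitarily equivalent to $\widehat M$, and $\dim\mathcal H_{\mathrm{loc}}=\sum_{W}(|W|-1)$ over non-trivial units. So, as multisets, $\mathrm{Spec}(M)=\mathrm{Spec}(\widehat M)\cup\mathrm{Spec}(M|_{\mathcal H_{\mathrm{loc}}})$. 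If $M$ is not symmetric, I would argue via characteristic polynomials: choose a basis adapted to the direct sum, so that $\chi_M=\chi_{\widehat M}\cdot\chi_{M|_{\mathcal H_{\mathrm{loc}}}}$. The multiset $\mathrm{Spec}(M|_{\mathcal H_{\mathrm{loc}}})$ has total multiplicity $\dim\mathcal H_{\mathrm{loc}}$. By Fact~\ref{fact-eigen-unit}, it consists of unit-eigenvalues $m_{uu}-m_{uv}$, and each $S_W$ contributes one eigenvalue with multiplicity $|W|-1$.

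For (i), on the event $A_n=\{\text{every unit is a singleton}\}$ we have $\mathcal H_{\mathrm{loc}}=\{0\}$. The unit partition is then discrete and $\widehat M=M$ up to relabeling, so $\mathrm{Spec}(M)=\mathrm{Spec}(\widehat M)$ holds deterministically on $A_n$. Corollary~\ref{cor:units-trivial} gives $\mathbb P(A_n)\to1$, which proves (i).

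For (ii)(a), let $B_n=\{U_m=0\ \forall m\ge3\}$. On $B_n$ every non-trivial unit has size $2$, so $\dim\mathcal H_{\mathrm{loc}}=U_2=Y$. Hence $\mathbb P(\dim\mathcal H_{\mathrm{loc}}\neq Y)\le\mathbb P(B_n^c)\to0$ by Theorem~\ref{thm:units-structure}(i). Theorem~\ref{thm:units-structure}(ii) gives $Y\xrightarrow{d}\mathrm{Poisson}\bigl(\tfrac{k-1}{4}e^{-2w}\bigr)$. A Slutsky-type argument then transfers the limit: two integer-valued sequences that agree with probability tending to one have the same distributional limit, since $|\mathbb P(D_n=j)-\mathbb P(Y=j)|\le\mathbb P(B_n^c)$.

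For (ii)(b), the deterministic splitting above shows that $\mathrm{Spec}(M)$ is $\mathrm{Spec}(\widehat M)$ together with the multiset $\mathrm{Spec}(M|_{\mathcal H_{\mathrm{loc}}})$. That multiset has total multiplicity $\dim\mathcal H_{\mathrm{loc}}$, which is almost surely finite for each $n$ and tight by (a), so its limit law is the same Poisson law.

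The only delicate point is interpreting ``consists of'' correctly as a multiset union, which needs the invariant direct sum (rather than just inclusion of eigenvalues). I would handle it with the block-diagonal characteristic polynomial factorization above rather than relying on orthogonality or diagonalizability. Everything probabilistic is already contained in Corollary~\ref{cor:units-trivial} and Theorem~\ref{thm:units-structure}.
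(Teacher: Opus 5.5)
Your proposal is correct and follows essentially the same route as the paper. Both read off the spectral splitting from the invariant decomposition $\R^{V(H_n)}=\mathcal H_{\mathrm{glob}}\oplus\mathcal H_{\mathrm{loc}}$, obtain (i) from Corollary~\ref{cor:units-trivial}, and obtain (ii) by noting that $\dim\mathcal H_{\mathrm{loc}}=Y$ on the high-probability event that all non-trivial units have size $2$ (Theorem~\ref{thm:units-structure}). Your characteristic-polynomial factorization and the bound $|\mathbb P(\dim\mathcal H_{\mathrm{loc}}=j)-\mathbb P(Y=j)|\le\mathbb P(B_n^c)$ make rigorous two steps the paper only states informally: the multiset union for possibly non-symmetric $M$, and the transfer of the Poisson limit.
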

	
	The additional eigenvalues described above arise from non-trivial units. 
	In the critical window, these units are, with high probability, of size $2$, so that each unit contributes exactly one eigenvalue.
	
	As an immediate consequence, the local component $M|_{\mathcal H_{\mathrm{loc}}}$ has random finite rank. Standard results on finite-rank perturbations therefore imply that the empirical spectral distribution of $M$ is asymptotically equivalent to that of its global component.
	
	\begin{corollary}[Stability of the empirical spectral distribution]
		\label{cor:esd-stability}
		If either $\lambda_n = \log n + c$ or  $\lambda_n = \tfrac{1}{2}(\log n + \log\log n) + w$, $c,w\in\mathbb{R}$,
		$\sup_n \mathbb{E}\!\left[\frac{1}{n}\operatorname{Tr}(M_n M_n^*)\right] < \infty$, and $M_n$ is a sequence of self-adjoint star-dependent matrices
		then the empirical spectral distribution of $M_n$ converges weakly if and only if the empirical spectral distribution of its restriction to $\mathcal H_{\mathrm{glob}}$ converges, and both limits coincide.
	\end{corollary}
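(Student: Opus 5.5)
Since $M_n$ is self-adjoint and star-dependent, Fact~\ref{thm:finite_decomposition} gives an orthogonal decomposition $\R^{V(H_n)}=\mathcal H_{\mathrm{glob}}\oplus\mathcal H_{\mathrm{loc}}$ into $M_n$-invariant subspaces. Hence $M_n$ is block diagonal, $M_n\cong M_n|_{\mathcal H_{\mathrm{glob}}}\oplus M_n|_{\mathcal H_{\mathrm{loc}}}$, and the spectrum of $M_n$ is the multiset union of the two restricted spectra. Write $d_n=\dim\mathcal H_{\mathrm{loc}}$, $N_n=n-d_n=\dim\mathcal H_{\mathrm{glob}}$, and let $\mu^{\mathrm{glob}}_n$, $\mu^{\mathrm{loc}}_n$ be the empirical spectral distributions of the two restrictions. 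Then
\[
\mu_{M_n}=\frac{N_n}{n}\,\mu^{\mathrm{glob}}_n+\frac{d_n}{n}\,\mu^{\mathrm{loc}}_n .
\]
For any bounded continuous $f$,
\[
\Bigl|\int f\,d\mu_{M_n}-\int f\,d\mu^{\mathrm{glob}}_n\Bigr|\le \frac{2d_n}{n}\|f\|_\infty .
\]
So the whole question reduces to showing $d_n/n\to0$, in probability, for the random setting.

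The key step is the tightness of $d_n$, which I take from results already proved. In the regime $\lambda_n=\log n+c$, Corollary~\ref{cor:units-trivial} gives $d_n=0$ with probability tending to one. In the regime $\lambda_n=\tfrac12(\log n+\log\log n)+w$, Theorem~\ref{thm:spectral-poisson}(ii)(a) shows that $d_n$ converges in distribution to a Poisson law, so it is tight. In either regime $d_n/n\to0$ in probability. A deterministic fallback also holds: each unit has at most $k$ vertices, so $d_n\le(1-1/k)n$. That bound alone is too weak, so the probabilistic input is essential.

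Because the spectral distributions here are random measures, I will interpret weak convergence in probability (or along subsequences, almost surely). The difference of integrals above is then bounded by $2\|f\|_\infty d_n/n\to0$ in probability. This gives $\int f\,d\mu_{M_n}-\int f\,d\mu^{\mathrm{glob}}_n\to0$ for every bounded continuous $f$. It follows that $\mu_{M_n}\rightsquigarrow\nu$ if and only if $\mu^{\mathrm{glob}}_n\rightsquigarrow\nu$, and the limits agree.

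The trace hypothesis $\sup_n\mathbb E[\frac1n\operatorname{Tr}(M_nM_n^*)]<\infty$ plays a supporting role. It gives tightness of $\mu_{M_n}$, since $\int x^2\,d\mu_{M_n}=\frac1n\operatorname{Tr}M_n^2$, which lets me pass to weakly convergent subsequences. Through the same second-moment bound it also gives tightness of $\mu^{\mathrm{glob}}_n$, because $\int x^2\,d\mu^{\mathrm{glob}}_n\le \frac{n}{N_n}\cdot\frac1n\operatorname{Tr}M_n^2$ and $n/N_n\le k$.

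I expect the main obstacle to be purely a matter of making precise the mode of convergence for random measures: in probability versus almost surely, and the normalization of $\mu^{\mathrm{glob}}_n$ by the random dimension $N_n$. The linear algebra itself is straightforward once the block decomposition and the bound on $d_n$ are in hand.
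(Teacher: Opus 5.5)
Your proposal is correct and follows essentially the same route as the paper: the mixture identity $\mu_{M_n}=\frac{n-d_n}{n}\mu^{\mathrm{glob}}_n+\frac{d_n}{n}\mu^{\mathrm{loc}}_n$, the bound $2\|f\|_\infty d_n/n$, the fact that $d_n/n\to0$ in probability because $d_n$ is tight, and the trace hypothesis used for tightness of the spectral measures. You handle the regime $\lambda_n=\log n+c$ explicitly via Corollary~\ref{cor:units-trivial}, and you use the bound $n/N_n\le k$ to get tightness of the global part; both are small refinements of steps the paper treats more tersely.
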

	\subsubsection{Invariant subspaces of star-dependent matrices and dynamical implications}
	\label{sec:dynamics}
	
	We now describe how the unit structure of a hypergraph constrains the invariant subspaces of associated star-dependent matrices. 
	Let $M$ be a linear operator acting on $\mathbb R^{V(H)}$. 
	A subspace $\mathcal U \subseteq \mathbb R^{V(H)}$ is said to be \emph{$M$-invariant} if 
	$M u \in \mathcal U$ for all $u \in \mathcal U$. 
	We denote by $\mathcal I(M)$ the collection of all $M$-invariant subspaces of $\mathbb R^{V(H)}$.
	
	Recall from Section~\ref{section-prili-unit} that for any star-dependent matrix $M$, the space $\mathbb R^{V(H)}$ admits an invariant decomposition
	$\mathbb R^{V(H)} = \mathcal H_{\mathrm{glob}} \oplus \mathcal H_{\mathrm{loc}}$,
	where $\mathcal H_{\mathrm{glob}}$ consists of functions that are constant on units and $\mathcal H_{\mathrm{loc}}$ captures fluctuations within units.
	Theorem~\ref{thm:units-structure} shows that, in the random hypergraph setting, the dimension of $\mathcal H_{\mathrm{loc}}$ is either zero with probability tending to one or remains finite and random.
	We now show that this structural dichotomy governs the full lattice of invariant subspaces of $M$.
	
	\begin{theorem}[Asymptotic structure of invariant subspaces]
		\label{thm:asymptotic_capture_dynamics}
		Let $H_n\sim\mathcal H(n,k,p_n)$ and let $M_n$ be a star-dependent matrix
		associated with $H_n$, with unit contraction $\widehat M_n$.
		
		\begin{enumerate}[label=(\roman*), leftmargin=2em]
			
			\item If $\lambda_n=\log n+c$, then
			$\mathbb P\!\left(
			\mathcal I(M_n)
			=
			\mathrm{Lift}\bigl(\mathcal I(\widehat M_n)\bigr)
			\right)
			\longrightarrow 1.$
			\item If
			$\lambda_n=\tfrac{1}{2}(\log n+\log\log n)+w$,
			then
			every $M_n$-invariant subspace $\mathcal V \subseteq \mathbb R^{V(H_n)}$
			admits a decomposition
			$\mathcal V
			=
			\mathcal V_{\mathrm{glob}}
			\oplus
			\mathcal V_{\mathrm{loc}}$,
			where $\mathcal V_{\mathrm{glob}}\subseteq \mathcal H_{\mathrm{glob}}$ and
			$\mathcal V_{\mathrm{loc}} \subseteq \mathcal H_{\mathrm{loc}}$.
			Moreover, for every $\varepsilon>0$ there exists $s>0$ such that
			\[
			\limsup_{n\to\infty}
			\mathbb P\!\big(\dim(\mathcal V_{\mathrm{loc}})>s\big)<\varepsilon.
			\]
		\end{enumerate}
	\end{theorem}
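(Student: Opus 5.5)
The plan is to reduce both parts to the deterministic decomposition of Fact~\ref{thm:finite_decomposition} and then feed in the probabilistic information from Corollary~\ref{cor:units-trivial} and Theorem~\ref{thm:spectral-poisson}(ii)(a).

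\emph{Part (i).} Let $A_n$ be the event that every unit of $H_n$ is a singleton. By Corollary~\ref{cor:units-trivial}, $\mathbb P(A_n)\to1$. On $A_n$ the unit partition is the discrete partition, so three things hold:
\begin{itemize}
\item the quotient matrix $\widehat M_n$ is $M_n$ itself, up to the identification of units with vertices;
\item $\mathcal H_{\mathrm{glob}}=\R^{V(H_n)}$ and $\mathcal H_{\mathrm{loc}}=\{0\}$;
\item the lift map $\mathcal C(\mathfrak P)\cong\R^{|\mathfrak U(H_n)|}\to\R^{V(H_n)}$ is the identity.
\end{itemize}
Hence $\mathrm{Lift}(\mathcal I(\widehat M_n))=\mathcal I(M_n)$ holds deterministically on $A_n$. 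The probability of this equality is therefore at least $\mathbb P(A_n)\to1$.

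\emph{Part (ii), decomposition.} This is the delicate step, since an arbitrary invariant subspace need not split along a pair of complementary invariant subspaces. A counterexample is the span of $x+y$, where $x\in\mathcal H_{\mathrm{glob}}$ and $y\in\mathcal H_{\mathrm{loc}}$ are eigenvectors for a \emph{common} eigenvalue. I will proceed in two steps.

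First, when $\mathrm{Spec}(M_n|_{\mathcal H_{\mathrm{glob}}})\cap\mathrm{Spec}(M_n|_{\mathcal H_{\mathrm{loc}}})=\emptyset$, the projections $P_{\mathrm{glob}}$ and $P_{\mathrm{loc}}$ along the invariant decomposition are Riesz projections. They are therefore polynomials in $M_n$, so they map every invariant $\mathcal V$ into itself. This gives
\[
\mathcal V=P_{\mathrm{glob}}\mathcal V\oplus P_{\mathrm{loc}}\mathcal V,\qquad \mathcal V_{\mathrm{glob}}:=\mathcal V\cap\mathcal H_{\mathrm{glob}},\quad \mathcal V_{\mathrm{loc}}:=\mathcal V\cap\mathcal H_{\mathrm{loc}}.
\]

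Second, in general I would use the weaker statement $\mathcal V\subseteq P_{\mathrm{glob}}\mathcal V\oplus P_{\mathrm{loc}}\mathcal V$, where both summands are $M_n$-invariant because $P_{\mathrm{glob}}$ and $P_{\mathrm{loc}}$ commute with $M_n$. This is the main obstacle. The statement as written only holds exactly under spectral disjointness, or with $\mathcal V_{\mathrm{glob}},\mathcal V_{\mathrm{loc}}$ read as these projected components. I would either state that caveat or check disjointness directly: in this regime the unit-eigenvalues are the finitely many values $m_{uu}-m_{uv}$ coming from size-$2$ units.

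\emph{Part (ii), dimension bound.} In either reading, $\mathcal V_{\mathrm{loc}}\subseteq\mathcal H_{\mathrm{loc}}$, so $\dim\mathcal V_{\mathrm{loc}}\le\dim\mathcal H_{\mathrm{loc}}$ uniformly over all invariant $\mathcal V$. By Theorem~\ref{thm:spectral-poisson}(ii)(a), $\dim\mathcal H_{\mathrm{loc}}\xrightarrow{d}Z\sim\mathrm{Poisson}\bigl(\tfrac{k-1}{4}e^{-2w}\bigr)$, and a sequence converging in distribution is tight. Given $\varepsilon>0$, choose an integer $s$ with $\mathbb P(Z>s)<\varepsilon$. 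The portmanteau theorem, applied to the closed set $[s+1,\infty)$ (integer-valued variables), then gives
\[
\limsup_n\mathbb P(\dim\mathcal V_{\mathrm{loc}}>s)\le\limsup_n\mathbb P(\dim\mathcal H_{\mathrm{loc}}\ge s+1)\le\mathbb P(Z\ge s+1)<\varepsilon .
\]
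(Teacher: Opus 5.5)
Your part (i), and the tightness argument in part (ii), are the paper's proof. You bound $\dim\mathcal V_{\mathrm{loc}}\le\dim\mathcal H_{\mathrm{loc}}$ and use the Poisson limit of $\dim\mathcal H_{\mathrm{loc}}$. The only cosmetic difference is that the paper re-derives that limit through $N_2+R_n$ and Slutsky's theorem, whereas you quote Theorem~\ref{thm:spectral-poisson}(ii)(a) and finish with an explicit portmanteau step. The real divergence is the decomposition, and there your objection is correct. The paper sets $\mathcal V_{\mathrm{glob}}=\mathcal V\cap\mathcal H_{\mathrm{glob}}$ and $\mathcal V_{\mathrm{loc}}=\mathcal V\cap\mathcal H_{\mathrm{loc}}$, and asserts ``by the direct sum decomposition'' that $\mathcal V=\mathcal V_{\mathrm{glob}}\oplus\mathcal V_{\mathrm{loc}}$. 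That is exactly the step your $\mathrm{span}(x+y)$ example refutes. Your Riesz-projection argument shows that disjointness of the spectra of $M_n|_{\mathcal H_{\mathrm{glob}}}$ and $M_n|_{\mathcal H_{\mathrm{loc}}}$ suffices, and your example shows it is also necessary, so the criterion is sharp.

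Drop the option of ``checking disjointness directly.'' $M_n$ is an arbitrary star-dependent matrix, and the decomposition claim then fails with non-vanishing probability. Take $M_n=I$ (that is, $F\equiv0$, $G\equiv1$) and let $\{u,v\}$ be a non-trivial unit. Then $\mathcal V=\mathrm{span}(\mathbf 1_u)$ is invariant. But $\mathbf 1_u$ is neither constant on $\{u,v\}$ nor of zero sum there, so $\mathcal V$ meets both $\mathcal H_{\mathrm{glob}}$ and $\mathcal H_{\mathrm{loc}}$ only in $0$. Since the summands of any decomposition of $\mathcal V$ must lie inside $\mathcal V$, no decomposition of the stated form exists. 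By Theorem~\ref{thm:units-structure}, such a unit exists with probability tending to $1-\exp(-\tfrac{k-1}{4}e^{-2w})>0$.

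The ``projected components'' reading does not restore equality either. In this example, $P_{\mathrm{glob}}\mathcal V\oplus P_{\mathrm{loc}}\mathcal V=\mathrm{span}(\mathbf 1_u+\mathbf 1_v)\oplus\mathrm{span}(\mathbf 1_u-\mathbf 1_v)$ is two-dimensional, while $\mathcal V$ is one-dimensional.

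So commit to one of two versions. Either state the result under the disjointness hypothesis, or use the following hypothesis-free form, which captures what the theorem intends:
\begin{itemize}
\item $\mathcal V\cap\mathcal H_{\mathrm{glob}}$ is the lift of a $\widehat M_n$-invariant subspace.
\item $\mathcal V/(\mathcal V\cap\mathcal H_{\mathrm{glob}})$ embeds into $\R^{V(H_n)}/\mathcal H_{\mathrm{glob}}\cong\mathcal H_{\mathrm{loc}}$, so the codimension of $\mathcal V\cap\mathcal H_{\mathrm{glob}}$ in $\mathcal V$ is at most $\dim\mathcal H_{\mathrm{loc}}$.
\end{itemize}
Your tightness bound then applies verbatim to this codimension.
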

	
	We now interpret these structural results in the context of linear dynamics. 
	Given a star-dependent matrix $M$, consider the discrete-time linear dynamical system
	\[
	x(t+1)=M\,x(t), \qquad x(t)\in \mathbb R^{V(H)}.
	\]
	A subset $C \subseteq V(H)$ is said to exhibit \emph{cluster synchronization} if all coordinates of $x(t)$ indexed by $C$ remain equal for all $t$, whenever they are equal at time $t=0$.
	We say that a state is \emph{unit-synchronized} if it is constant on each non-trivial unit of $H$, that is, if it belongs to $\mathcal H_{\mathrm{glob}}$. 
	Since $\mathcal H_{\mathrm{glob}}$ is invariant under $M$, unit-synchronization is preserved under the dynamics generated by any star-dependent matrix. That is for unit synchronization, units are synchronization preserving clusters. 
	The following corollary describes the asymptotic behavior of unit-synchronization.
	
	\begin{corollary}[Unit-synchronization structure]
		\label{cor:unit-synchronization}
		Let $M_n$ be a star-dependent matrix associated with
		$H_n \sim \mathcal H(n,k,p_n)$.
		
		\begin{enumerate}[label=(\roman*), leftmargin=2em]
			
			\item If $\lambda_n=\log n+c$, then asymptotically all unit-synchronized states are trivial.
			
			\item If
			$\lambda_n=\tfrac{1}{2}(\log n+\log\log n)+w$,
			then unit-synchronized states are supported on a finite random collection of clusters.
			
		\end{enumerate}
	\end{corollary}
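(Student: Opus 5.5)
The corollary should follow from Fact~\ref{thm:finite_decomposition} together with the probabilistic unit results, Corollary~\ref{cor:units-trivial} and Theorem~\ref{thm:units-structure}. First I will record the deterministic observation. For any star-dependent matrix $M$, the space $\mathcal H_{\mathrm{glob}}$ of vectors constant on each unit is $M$-invariant. A unit-synchronized state is therefore just an element of $\mathcal H_{\mathrm{glob}}$, and the synchronization constraints come only from the non-trivial units, since singleton units impose no condition. Hence the clusters that are synchronized non-trivially are exactly the non-trivial units $W\in\mathfrak U(H_n)$ with $|W|\ge 2$. Their number and sizes determine the codimension of $\mathcal H_{\mathrm{glob}}$, namely $\dim\mathcal H_{\mathrm{loc}}=\sum_W(|W|-1)$.

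For part (i), I take $\lambda_n=\log n+c$. By Corollary~\ref{cor:units-trivial}, with probability tending to one every unit is a singleton. On this event $\mathcal H_{\mathrm{loc}}=\{0\}$ and $\mathcal H_{\mathrm{glob}}=\mathbb R^{V(H_n)}$. The unit-synchronization constraint is then vacuous: no cluster of size at least $2$ is forced to stay synchronized by the unit structure. This is the sense in which the unit-synchronized states are trivial, and the conclusion holds with probability $\to1$.

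For part (ii), I take $\lambda_n=\tfrac12(\log n+\log\log n)+w$. Theorem~\ref{thm:units-structure} gives $Y\xrightarrow{d}\mathrm{Poisson}(\tfrac{k-1}{4}e^{-2w})$ and shows that with high probability all non-trivial units have size $2$. Since $Y$ converges in distribution, it is tight: for each $\varepsilon>0$ there is an $s$ with $\limsup_n\mathbb P(Y>s)<\varepsilon$. So the collection of synchronizing clusters is a finite random family of pairs, with a nondegenerate Poisson-distributed count. The invariance of $\mathcal H_{\mathrm{glob}}$ (Fact~\ref{thm:finite_decomposition}) then shows that each such pair remains synchronized under $x(t+1)=Mx(t)$ once it is synchronized at time $0$.

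The only delicate step is interpretive rather than technical. The informal phrases ``trivial'' and ``supported on a finite random collection of clusters'' have to be fixed as the precise statements above: the event $\{\mathcal H_{\mathrm{loc}}=0\}$ in part (i), and tightness of the number of non-trivial clusters in part (ii). Once these formulations are set, the argument is a direct transcription of the earlier results.
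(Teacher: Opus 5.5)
Your proposal is correct and follows essentially the same route as the paper. Both use invariance of $\mathcal H_{\mathrm{glob}}$ under $M_n$ (Fact~\ref{thm:finite_decomposition}), then the vanishing of non-trivial units for $\lambda_n=\log n+c$, and then the Poisson limit with size-$2$ units from Theorem~\ref{thm:units-structure}; the paper reaches the vanishing via Theorem~\ref{thm:asymptotic_capture_dynamics}, which rests on the Corollary~\ref{cor:units-trivial} you cite directly, and your reading of ``finite random collection'' as tightness of the number of non-trivial units just makes explicit what the paper leaves implicit.
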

	
	We emphasize that unit-synchronization captures synchronization induced by star-equivalence and does not preclude additional synchronization patterns arising from the quotient dynamics.
	\subsubsection{Reconstruction and asymptotic vertex fingerprinting}
	\label{sec:reconstruction}
	
	Recall that vertices of a hypergraph are indistinguishable with respect to all star-dependent observables if and only if they belong to the same unit. Equivalently, the unit partition is the coarsest partition of $V(H_n)$ that preserves all star-based information.
	
	\begin{proposition}[Asymptotic vertex fingerprinting by stars]
		\label{thm:asymptotic_fingerprinting}
		Let $H_n \sim \mathcal H(n,k,p_n)$. If
		$\lambda_n = \log n + c$
		for some fixed $c \in \mathbb R$, then with high probability
		distinct vertices have distinct stars.
	\end{proposition}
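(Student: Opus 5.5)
The plan is to reduce the statement to the vanishing of all non-degenerate star-collisions. I read ``distinct stars'' in the paper's sense of star-based distinguishability, as set up in Section~\ref{sec:reconstruction} and the Remark following Fact~\ref{thm:finite_decomposition}. There, vertices are identified exactly when they lie in the same unit, and isolated vertices are singleton units. So the statement is equivalent to: with high probability the unit partition $\mathfrak U(H_n)$ is the partition into singletons. Equivalently, $Y=\sum_{r\ge 1}X_r=0$ with high probability.

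The steps, in order, are as follows.

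First, $\{\exists\,W\in\mathfrak U(H_n):|W|\ge 2\}=\{Y\ge 1\}$. This holds because a unit of size at least two is precisely a pair with $\Gamma(u)=\Gamma(v)\neq\emptyset$.

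Second, I would bound $\mathbb P(Y\ge1)\le \mathbb E[Y]=\sum_{r\ge1}\mathbb E[X_r]$. Theorem~\ref{thm:star-collision-critical}(i) gives $\mathbb E[X_r]\to0$ only for each fixed $r$, so the step needing care is summing over all $r$ up to $\binom{n-2}{k-2}$. I would handle this by a direct uniform first-moment computation. A collision $\Gamma(u)=\Gamma(v)=E$ with $|E|=r$ forces three things:
\begin{itemize}
\item every $e\in E$ contains both $u$ and $v$;
\item none of the $\binom{n-2}{k-1}$ sets containing $u$ but not $v$ is an edge;
\item none of the $\binom{n-2}{k-1}$ sets containing $v$ but not $u$ is an edge.
\end{itemize}
Hence
\[
\mathbb E[Y]\le \binom n2 (1-p_n)^{2\binom{n-2}{k-1}}\,\mathbb P\bigl(\mathrm{Bin}(\tbinom{n-2}{k-2},p_n)\ge1\bigr)\le \binom n2 e^{-2\lambda_n(1+o(1))}\binom{n-2}{k-2}p_n .
\]
Using $\binom{n-2}{k-2}p_n=\frac{k-1}{n-1}\lambda_n$ and $\binom{n-2}{k-1}/\binom{n-1}{k-1}=1-O(1/n)$, this is $O\!\left(n^2\cdot n^{-2}e^{-2c}\cdot \frac{\log n}{n}\right)=O(\log n/n)\to0$. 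This bound is uniform in $r$, and it also re-proves Corollary~\ref{cor:units-trivial}, which I could cite instead.

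Third, I conclude that with high probability every unit is a singleton. Consequently no two vertices share a nonempty star. By the convention of the Remark, each isolated vertex is its own equivalence class. So all vertices are distinguished by their stars in the star-based sense of the unit partition, which gives the proposition.

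The main obstacle is the degenerate part. If one reads the statement literally as injectivity of $v\mapsto\Gamma(v)$ including empty stars, Theorem~\ref{thm:degenerate-count} shows $\mathbb P(X_0\ge1)\to 1-e^{-e^{-c}}(1+e^{-c})>0$. So at fixed $c$ the argument must rest on the unit convention for isolated vertices; I would state this reading explicitly. If full injectivity is wanted, I would add that it follows for $c=c_n\to\infty$ by Theorem~\ref{thm:degenerate-threshold}(i). The estimate in the second step remains valid there, since $\mathbb E[Y]$ only decreases as $\lambda_n$ grows.
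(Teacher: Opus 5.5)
Your argument is the paper's: the paper proves the proposition by citing Corollary~\ref{cor:units-trivial}, whose proof is exactly your first-moment bound $\sum_{r\ge1}\mathbb E[X_r]=\binom n2(1-p_n)^{2\binom{n-2}{k-1}}\bigl(1-(1-p_n)^{\binom{n-2}{k-2}}\bigr)=O(\log n/n)$, taken over all $r$ at once. Your caveat is also correct and fixes a real flaw in the paper: its closing ``Equivalently, distinct vertices have distinct stars'' silently ignores isolated vertices, and by Theorem~\ref{thm:degenerate-count} two vertices share the empty star with probability tending to $1-e^{-e^{-c}}(1+e^{-c})>0$, so at fixed $c$ only your unit-partition reading holds, and literal injectivity needs $c_n\to\infty$.
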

	Equivalently, with high probability every unit is trivial, so the natural projection
	$
	V(H_n)\to\mathfrak U(H_n)
	$
	is bijective.	
	
	\section{Proof of the results}\label{sec:proof}
	\label{sec:proofs}
	
	We organize the proofs according to the two regimes identified in the 
	previous section. In the regime $\lambda_n = \log n + O(1)$, the arguments 
	are driven by the behavior of isolated vertices, which determine the 
	degenerate star-collision structure. In particular, the proof of 
	Theorem~\ref{thm:degenerate-count} relies on a factorial moment computation, 
	reducing the problem to the Poisson limit for isolated vertices.
	
	In contrast, the analysis in the regime 
	$\lambda_n = \tfrac{1}{2}(\log n + \log\log n) + O(1)$ requires a more 
	refined counting of small configurations, leading to Poisson limits for 
	non-degenerate star-collisions.
	
	Throughout the proofs, we use standard asymptotic notation.
	For sequences $a_n,b_n$, we write $a_n = O(b_n)$ if $|a_n| \le C b_n$
	for some constant $C>0$ independent of $n$, and $a_n = o(b_n)$ if
	$a_n/b_n \to 0$. We write $a_n = \Theta(b_n)$ if $a_n = O(b_n)$ and
	$b_n = O(a_n)$. All implicit constants may depend on $k$ and on any
	fixed parameter such as $t$. All asymptotic statements and limits are understood as $n\to\infty$,
	unless explicitly stated otherwise.
	
	We repeatedly use Markov's inequality in the form
	$\mathbb{P}(X_n \ge 1) \le \mathbb{E}[X_n]$,
	so that $\mathbb{E}[X_n] \to 0$ implies $\mathbb{P}(X_n \ge 1) \to 0$.
	All probabilities are taken with respect to the random hypergraph
	$\mathcal{H}(n,k,p_n)$, in which hyperedges are included independently.
	
	We now proceed to the proofs of the results in each regime.
	\subsection{Proofs of star-collision results}We collect here the proofs of the results stated in 
	Subsection~\ref{sec:main result-1}. The analysis is based on a common 
	combinatorial expression for the expectation of the number 
	of star-collisions of a given support size, which we derive 
	first. This expression serves as the starting point for all 
	subsequent arguments.
	
	The results in the regime $\lambda_n = \log n + O(1)$ are 
	obtained by analyzing this expectation together with standard 
	first and second moment arguments. In particular, the behavior 
	of degenerate star-collisions is closely tied to the number 
	of isolated vertices.
	
	In the sparser regime 
	$\lambda_n = \tfrac{1}{2}(\log n + \log\log n) + w$, 
	the same expectation estimate, combined with factorial moment 
	calculations, yields Poisson limit laws and structural results 
	for non-degenerate star-collisions and units.
	Fix a pair of distinct vertices $\{u,v\} \subset [n]$.
	Partition the family of $k$-subsets of $[n]$ according to their 
	intersection with $\{u,v\}$. There are
	$\binom{n-2}{k-2}$ subsets containing both $u$ and $v$, and 
	$\binom{n-2}{k-1}$ subsets containing $u$ but not $v$ (and the same 
	number containing $v$ but not $u$).
	
	For $\Gamma(u)=\Gamma(v)$ with $|\Gamma(u)|=r$, exactly $r$ of the 
	$\binom{n-2}{k-2}$ subsets containing both vertices must appear as 
	hyperedges, while the remaining such subsets must be absent. Moreover, 
	all subsets containing exactly one of $u$ and $v$ must be absent.
	
	By independence of edges in $\mathcal{H}(n,k,p_n)$,
	\begin{align}\label{eq:pair-collision}
		&\mathbb{P}(\text{$\{u,v\}$ forms a star-collision of support size $r$ })
		\\\notag&=
		\binom{\binom{n-2}{k-2}}{r}
		p_n^r
		(1-p_n)^{2\binom{n-2}{k-1} + \binom{n-2}{k-2}-r}.
	\end{align}
	This equation leads us to the following Lemma.
	\begin{lemma}[Expected number of star-collisions]
		\label{lem:EXr}
		Let $k \ge 2$ be fixed and let $H_n \in \mathcal{H}(n,k,p_n)$.
		For each $r \ge 0$, let $X_r$ denote the number of star-collisions
		of support size $r$. Then
		\begin{equation}\label{eq:EXr}
			\mathbb{E}[X_r]
			=
			\binom{n}{2}
			\binom{\binom{n-2}{k-2}}{r}
			p_n^r
			(1-p_n)^{2\binom{n-2}{k-1} + \binom{n-2}{k-2} - r}.
		\end{equation}
		In terms of $\lambda_n = \binom{n-1}{k-1} p_n$, if $\lambda_n=o(n^{(k-1)/2})$ and $\lambda_n=o(n)$, then for each fixed $r$,
		\begin{equation}\label{eq:EXr_asymp}
			\mathbb{E}[X_r]
			=
			\frac{(k-1)^r}{2\, r!}
			\, n^{2-r}
			\, \lambda_n^r
			\, \exp(-2\lambda_n)
			\,(1+o(1)).
		\end{equation}
	\end{lemma}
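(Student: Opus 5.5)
The exact formula \eqref{eq:EXr} follows from linearity of expectation. Write $X_r=\sum_{\{u,v\}}\mathbf 1_{A_{uv}}$, where $A_{uv}$ is the event that $\{u,v\}$ forms a star-collision of support size $r$. By symmetry every pair has the probability given in \eqref{eq:pair-collision}, and there are $\binom{n}{2}$ pairs, which gives \eqref{eq:EXr}. All the remaining work is asymptotic analysis of the three factors, each compared with its counterpart expressed through $\lambda_n=\binom{n-1}{k-1}p_n$.

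\textbf{Combinatorial factors.} Set $N=\binom{n-2}{k-2}$. Then $N=\frac{k-1}{n-1}\binom{n-1}{k-1}$, so $Np_n=\frac{k-1}{n-1}\lambda_n$. For fixed $r$ and $N\to\infty$ (for $k=2$, $N=1$; I treat this below),
\[
\binom{N}{r}p_n^r=\frac{(Np_n)^r}{r!}(1+o(1))=\frac{(k-1)^r\lambda_n^r}{r!\,n^r}(1+o(1)).
\]
Multiplying by $\binom n2=\frac{n^2}{2}(1+o(1))$ gives the prefactor $\frac{(k-1)^r}{2r!}n^{2-r}\lambda_n^r$.

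\textbf{Exponential factor (the main step).} Let
\[
m=2\binom{n-2}{k-1}+\binom{n-2}{k-2}-r .
\]
Using $\binom{n-2}{k-1}=\binom{n-1}{k-1}-\binom{n-2}{k-2}$, this is $m=2\binom{n-1}{k-1}-N-r$. So
\[
m\,p_n=2\lambda_n-\frac{k-1}{n-1}\lambda_n-rp_n=2\lambda_n+o(1),
\]
because $\lambda_n=o(n)$, and $p_n\to0$ follows from $\lambda_n=o(n)$ when $k\ge2$. Next write
\[
(1-p_n)^m=\exp\bigl(-mp_n-O(mp_n^2)\bigr).
\]
The correction satisfies $mp_n^2=O(\lambda_n p_n)=O\bigl(\lambda_n^2/\binom{n-1}{k-1}\bigr)=O(\lambda_n^2/n^{k-1})$, and this is $o(1)$ exactly by the hypothesis $\lambda_n=o(n^{(k-1)/2})$. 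Hence $(1-p_n)^m=e^{-2\lambda_n}(1+o(1))$. I expect this to be the only delicate point. The $(1+o(1))$ in \eqref{eq:EXr_asymp} requires the additive error in the exponent to tend to $0$, not just to be small relative to $\lambda_n$. This is why both hypotheses are needed: $\lambda_n=o(n)$ controls the $N p_n$ term, and $\lambda_n=o(n^{(k-1)/2})$ controls the quadratic term in the expansion of $\log(1-p_n)$.

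\textbf{The case $k=2$.} Here $N=1$. For $r\ge2$ both sides vanish: the left side because $\binom{1}{r}=0$, and the right side should be read with $(k-1)^r=1$, so the formula is only meaningful for $r\le N$. For $r\in\{0,1\}$ the direct computation $\binom{1}{r}p_n^r=p_n^r=(\lambda_n/(n-1))^r$ agrees with the general formula. More generally, the binomial approximation $\binom{N}{r}=\frac{N^r}{r!}(1+O(r^2/N))$ needs $N\to\infty$, which holds for $k\ge3$; for $k=2$ I would check the cases $r=0,1$ by hand as just described.

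Combining the three factors gives \eqref{eq:EXr_asymp}.
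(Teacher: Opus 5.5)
Your proof is correct and follows the paper's argument step for step. The exact formula comes from linearity of expectation, the prefactor from $\binom{n-2}{k-2}p_n=\frac{k-1}{n-1}\lambda_n$, and the exponential factor from the identity $2\binom{n-2}{k-1}+\binom{n-2}{k-2}=2\binom{n-1}{k-1}-\binom{n-2}{k-2}$. As in the paper, $\lambda_n=o(n)$ kills the linear correction and $\lambda_n=o(n^{(k-1)/2})$ kills the quadratic one.

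One correction to your $k=2$ paragraph. For $r\ge 2$ the right-hand side does not vanish: with $(k-1)^r=1$ it equals $\frac{1}{2\,r!}n^{2-r}\lambda_n^r e^{-2\lambda_n}(1+o(1))>0$, while the left-hand side is $0$. So the asymptotic formula is genuinely false there, and the lemma should be restricted to $k\ge 3$, or to $r\le 1$ when $k=2$. The paper's proof silently passes over this case when it writes $\binom{N}{r}=\frac{1+o(1)}{r!}N^r$ for $N=\binom{n-2}{k-2}$ without having $N\to\infty$.
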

	\begin{proof}
		By summing \eqref{eq:pair-collision} over all $\binom{n}{2}$ pairs, we obtain \eqref{eq:EXr}.
		For the asymptotic form, note that
		$p_n = \lambda_n / \binom{n-1}{k-1}$ and
		$\binom{n-2}{k-2}
		=
		\left(1+o(1)\right)\frac{k-1}{n}\binom{n-1}{k-1}$.
		For fixed $r$, we have
		$
	\binom{\binom{n-2}{k-2}}{r}
	=
	\frac{1+o(1)}{r!}
	\left(\binom{n-2}{k-2}\right)^r.
		$
		Moreover, since $p_n \to 0$, we have
		$\log(1-p_n) = -p_n + O(p_n^2).$
		Thus,
		\begin{align*}
			&(1-p_n)^{2\binom{n-2}{k-1} + \binom{n-2}{k-2} - r} \\
			&\quad =
			\exp\!\Big(
			-\;p_n\big(2\binom{n-2}{k-1} + \binom{n-2}{k-2} - r\big)
			\;+\;
			O\!\big(p_n^2 \binom{n-1}{k-1}\big)
			\Big).
		\end{align*}
		Using
		$p_n \binom{n-1}{k-1} = \lambda_n
		\quad \text{and} \quad
		2\binom{n-2}{k-1} + \binom{n-2}{k-2}
		= 2\binom{n-1}{k-1} - \binom{n-2}{k-2},$
		we obtain $$p_n\big(2\binom{n-2}{k-1} + \binom{n-2}{k-2} - r\big)
		=
		2\lambda_n
		-
		p_n\binom{n-2}{k-2}
		-
		rp_n.$$ Using $\lambda_n=o(n)$ we have $p_n\binom{n-2}{k-2}
		=
		O(\lambda_n/n)
		=o(1)$. Thus, we have
		\[
		p_n\big(2\binom{n-2}{k-1} + \binom{n-2}{k-2} - r\big)
		= 2\lambda_n + o(1),
		\]
		while using $\lambda_n=o(n^{(k-1)/2})$, 
		$p_n^2 \binom{n-1}{k-1} \to 0$.
		Therefore,
		$(1-p_n)^{2\binom{n-2}{k-1} + \binom{n-2}{k-2} - r}
		=
		\exp(-2\lambda_n + o(1))$. Substituting these estimates into \eqref{eq:EXr} yields
		\[
		\mathbb{E}[X_r]
		=
		\frac{(k-1)^r}{2r!}
		n^{2-r}\lambda_n^r
		\exp(-2\lambda_n)
		(1+o(1)).
		\]
	\end{proof}
	The asymptotic assumptions above are satisfied in all regimes considered later in this paper, including the finite-$\lambda$ regime, the logarithmic regime $\lambda_n=\log n + c$, and the critical logarithmic regime
	$\lambda_n=\tfrac12(\log n+\log\log n)+c$.
	
	\subsubsection{Proof of Theorem~\ref{thm:star-collision-critical}}
	\label{proof:thm:star-collision-critical}
	We prove Theorem~\ref{thm:star-collision-critical}, which describes the behavior of $X_r$ in the regime $\lambda_n = \log n + c$.
	\begin{proof}
		We use Lemma~\ref{lem:EXr} under the scaling $\lambda_n = \log n + c$.
		
		\medskip
		\noindent
		\textbf{(i) The case $r \ge 1$.}
		From Lemma~\ref{lem:EXr}, for each fixed $r \ge 1$,
		\[
		\mathbb{E}[X_r]
		=
		\frac{(k-1)^r}{2\,r!}
		\, n^{2-r}
		\, \lambda_n^r
		\, e^{-2\lambda_n}
		\,(1+o(1)).
		\]
		Substituting $\lambda_n = \log n + c$, we obtain
		\[
		\mathbb{E}[X_r]
		=
		\frac{(k-1)^r}{2\,r!}
		(\log n + c)^r
		\, n^{-r}
		\, e^{-2c}
		\,(1+o(1)).
		\]
		Since $(\log n)^r = o(n^r)$, it follows that
		$\mathbb{E}[X_r] \longrightarrow 0$ as $n\to\infty$.
		By Markov's inequality,
		$\mathbb{P}(X_r \ge 1)
		\le
		\mathbb{E}[X_r]
		\longrightarrow 0$.

		\medskip
		\noindent
		\textbf{(ii) The case $r=0$.}
		From Lemma~\ref{lem:EXr}, we have
		\[
		\mathbb{E}[X_0]
		=
		\binom{n}{2}
		e^{-2\lambda_n}
		(1+o(1)).
		\]
		Substituting $\lambda_n = \log n + c$, we obtain
		\[
		\mathbb{E}[X_0]
		=
		\frac{n^2}{2}
		\cdot n^{-2}
		e^{-2c}
		(1+o(1))
		\longrightarrow
		\frac{1}{2} e^{-2c}.
		\]
	\end{proof}
	The vanishing of all non-degenerate star-collisions has a direct
	structural consequence for the unit decomposition of the hypergraph.
\begin{proof}[Proof of Corollary~\ref{cor:units-trivial}]
	A non-trivial unit exists if and only if there exists a pair of
	distinct vertices with identical non-empty stars. Equivalently,
	\[
	\{\exists\, W \in \mathfrak U(H_n)\text{ with } |W|\ge2\}
	=
	\left\{\sum_{r\ge1} X_r \ge 1\right\}.
	\]
	Therefore, by Markov's inequality,
	\[
	\mathbb P\!\left(\exists\, W \in \mathfrak U(H_n)\text{ with } |W|\ge2\right)
	=
	\mathbb P\!\left(\sum_{r\ge1}X_r\ge1\right)
	\le
	\sum_{r\ge1}\mathbb E[X_r].
	\]
	
	Using the exact formula from Lemma~\ref{lem:EXr},
\begin{align*}
	\sum_{r\ge1}\mathbb E[X_r]
	&=
	\binom n2
	\sum_{r\ge1}
	\binom{\binom{n-2}{k-2}}r
	p_n^r
	(1-p_n)^{2\binom{n-2}{k-1}+\binom{n-2}{k-2}-r}
	\\
	&=
	\binom n2
	(1-p_n)^{2\binom{n-2}{k-1}+\binom{n-2}{k-2}}
	\sum_{r\ge1}
	\binom{\binom{n-2}{k-2}}r
	\left(\frac{p_n}{1-p_n}\right)^r
	\\
	&=
	\binom n2
	(1-p_n)^{2\binom{n-2}{k-1}}
	\left(
	1-(1-p_n)^{\binom{n-2}{k-2}}
	\right),
\end{align*}
	where the last step follows from the binomial theorem.
	
	Now,
	$
	(1-p_n)^{2\binom{n-2}{k-1}}
	=
	n^{-2}e^{-2c}(1+o(1)),
$	and
	\[
	1-(1-p_n)^{\binom{n-2}{k-2}}
	=
	O\!\left(
	p_n\binom{n-2}{k-2}
	\right)
	=
	O\!\left(\frac{\log n}{n}\right).
	\]
That is,
	$
	\sum_{r\ge1}\mathbb E[X_r]
	=
	O\!\left(\frac{\log n}{n}\right)
	\longrightarrow 0.
	$
	Therefore,
	\[
	\mathbb P(\exists\, W \in \mathfrak U(H_n)\text{ with } |W|\ge2)
	\longrightarrow 0.
	\]
\end{proof}
	In view of Theorem~\ref{thm:star-collision-critical} and  Corollary~\ref{cor:units-trivial}, the star-collision structure
	in the regime $\lambda_n = \log n + c$ is entirely governed by
	degenerate collisions. It therefore remains to understand the behavior
	of the number of degenerate star-collisions
	$ X_0$.
	We begin by identifying the threshold for the existence of such
	collisions.
	\subsubsection{Proof of Theorem~\ref{thm:degenerate-threshold}}
	\begin{proof}
		Let $I_n$ denote the number of isolated vertices in 
		$H_n \in \mathcal{H}(n,k,p_n)$. By definition, a vertex is isolated 
		if none of the $\binom{n-1}{k-1}$ edges incident to it are present. 
		Thus,
		$\mathbb{P}(\text{a fixed vertex is isolated})
		=
		(1-p_n)^{\binom{n-1}{k-1}}.$
		By linearity of expectation,
		\[
		\mathbb{E}[I_n]
		=
		n (1-p_n)^{\binom{n-1}{k-1}}
		=
		n \exp(-\lambda_n + o(1)).
		\]
		We now distinguish the two regimes.
		
		\medskip
		\noindent
		\textbf{(i) Supercritical case: $\lambda_n = \log n + w(n)$, $w(n)\to+\infty$.}
		In this case,
		$$
		\mathbb{E}[I_n]
		=
		n e^{-\lambda_n + o(1)}
		=
		e^{-w(n) + o(1)}
		\longrightarrow 0.
		$$
		Thus, by Markov's inequality,
		$\mathbb{P}(I_n \ge 1)
		\le
		\mathbb{E}[I_n]
		\longrightarrow 0$,
		so $\mathbb{P}(I_n = 0) \to 1$.
		
		\medskip
		\noindent
		\textbf{(ii) Subcritical case: $\lambda_n = \log n - w(n)$, $w(n)\to+\infty$.}
		To control fluctuations, write $I_n = \sum_{u} J_u$, where 
		$J_u = \mathbf{1}_{\{u \text{ is isolated}\}}$. Then
		\[
		\mathrm{Var}(I_n)
		=
		\sum_u \mathrm{Var}(J_u)
		+
		\sum_{u\ne v} \mathrm{Cov}(J_u,J_v).
		\]
		Since $\mathrm{Var}(J_u) \le \mathbb{E}[J_u]$, we have
		$\sum_u \mathrm{Var}(J_u) \le \mathbb{E}[I_n].$
		For $u \ne v$, we have
		\[
		\mathrm{Cov}(J_u,J_v)
		=
		\mathbb{E}[J_uJ_v] - \mathbb{E}[J_u]\mathbb{E}[J_v]
		=
		(1-p_n)^{2\binom{n-1}{k-1}}
		\left[
		(1-p_n)^{-\binom{n-2}{k-2}} - 1
		\right].
		\]
		Using the expansion $\log(1-x) = -x + O(x^2)$ as $x \to 0$, we obtain
		\[
		(1-p_n)^{-\binom{n-2}{k-2}}
		=
		\exp\!\left(
		p_n \binom{n-2}{k-2}
		+
		O\!\left(p_n^2 \binom{n-2}{k-2}\right)
		\right).
		\]
		Since $p_n \binom{n-1}{k-1} = \lambda_n$ and 
		$\binom{n-2}{k-2}/\binom{n-1}{k-1} = \frac{k-1}{n-1}$, it follows that
		\[
		p_n \binom{n-2}{k-2}
		=
		\frac{k-1}{n-1}\,\lambda_n
		=
		O\!\left(\frac{\lambda_n}{n}\right),
		\]
		and similarly the quadratic term is negligible. Since $\exp(o(1)) = 1 + o(1)$, it follows that
		$(1-p_n)^{-\binom{n-2}{k-2}} - 1
		=
		O\!\left(\frac{\lambda_n}{n}\right)=o(1)$. Since in the present regime
		$
		\lambda_n=\log n-w(n)\le \log n,
		$
		we have $\lambda_n=o(n)$.
		Thus,
		\[
		\mathrm{Cov}(J_u,J_v)
		=
		\mathbb{E}[J_u]\mathbb{E}[J_v]
		\cdot O\!\left(\frac{\lambda_n}{n}\right)=
		\mathbb{E}[J_u]\mathbb{E}[J_v]
		\cdot o(1),
		\]
		and summing over all $u\ne v$ yields
		$
		\sum_{u\ne v} \mathrm{Cov}(J_u,J_v)
		=
		o\!\left(\mathbb{E}[I_n]^2\right).
		$ Therefore,
		\[
		\mathrm{Var}(I_n)
		=
		O(\mathbb{E}[I_n])
		+
		o(\mathbb{E}[I_n]^2).
		\]
		In this case,
		$$
		\mathbb{E}[I_n]
		=
		n e^{-\lambda_n + o(1)}
		=
		e^{w(n) + o(1)}
		\longrightarrow \infty.
		$$
		From the variance bound
		$\mathrm{Var}(I_n)
		=
		O(\mathbb{E}[I_n])
		+
		o(\mathbb{E}[I_n]^2)$, it follows that
		$\frac{\mathrm{Var}(I_n)}{(\mathbb{E}[I_n])^2}
		\to 0,$
		since $\mathbb{E}[I_n] \to \infty$ in this regime.
		Now, we apply Chebyshev's inequality to obtain
		\[
		\mathbb{P}\bigl(|I_n - \mathbb{E}[I_n]| \ge \tfrac{1}{2}\mathbb{E}[I_n]\bigr)
		\;\le\;
		\frac{4\,\mathrm{Var}(I_n)}{\mathbb{E}[I_n]^2}
		\;\longrightarrow\; 0.
		\]
		That is,
		$\mathbb{P}\!\left(I_n \ge \tfrac{1}{2}\mathbb{E}[I_n]\right)
		\longrightarrow 1.$ Since $\mathbb{E}[I_n] \to \infty$, we have
		$\tfrac{1}{2}\mathbb{E}[I_n] \ge 2$ for all sufficiently large $n$.
		Thus,
		$\{ I_n \ge \tfrac{1}{2}\mathbb{E}[I_n] \}
		\subseteq
		\{ I_n \ge 2 \}$ for sufficiently large $n$.
		That is,
		$\mathbb{P}(I_n \ge 2)
		\;\ge\;
		\mathbb{P}\!\left(I_n \ge \tfrac{1}{2}\mathbb{E}[I_n]\right)
		\longrightarrow 1$.
			Finally, observe that
		$X_0 = \binom{I_n}{2}$.
		Thus, we have $X_0 > 0$ if and only if $I_n \ge 2$.
		Therefore, the two statements of the theorem now follow immediately.
	\end{proof}
	The above estimates for $I_n$ are standard and follow from classical
	results on isolated vertices in random hypergraphs; see \cite{burghart2024hitting,frieze2025threshold}.
	Both Theorem~\ref{thm:degenerate-threshold} and
	Theorem~\ref{thm:degenerate-count} are governed by the behavior of
	isolated vertices. Indeed, recalling that
	$X_0 = \binom{I_n}{2}$,
	where $I_n$ denotes the number of isolated vertices, the threshold
	behavior of $X_0$ follows from the growth of $I_n$, while a more
	refined analysis of $I_n$ yields the limiting distribution of $X_0$.
	We now determine this limiting distribution in the critical window.
	\subsubsection{Degenerate star-collisions in the critical window: Proof of Theorem~\ref{thm:degenerate-count}}
	\label{proof:thm:degenerate-count}
	
	\begin{proof}
		We prove Theorem~\ref{thm:degenerate-count} by relating degenerate
		star-collisions to isolated vertices.
		Let $I_n$ denote the number of isolated vertices in
		$H_n \in \mathcal{H}(n,k,p_n)$, that is,
		\[
		I_n = \sum_{v \in [n]} \mathbf{1}_{\{v \text{ is isolated}\}}.
		\]
		By definition, a degenerate star-collision corresponds to a pair of
		vertices with empty stars, and thus
		$X_0 = \binom{I_n}{2}$.
		For a fixed vertex $v$, the number of edges containing $v$ is
		$\binom{n-1}{k-1}$. Thus
		\[
		\mathbb{P}(v \text{ is isolated})
		=
		(1 - p_n)^{\binom{n-1}{k-1}}.
		\]
		Using $p_n = \lambda_n / \binom{n-1}{k-1}$ with
		$\lambda_n = \log n + c$, and the expansion
		$\log(1 - p_n) = -p_n + O(p_n^2)$, we obtain
		$(1 - p_n)^{\binom{n-1}{k-1}}
		=
		\exp(-\lambda_n + o(1))
		=
		n^{-1} e^{-c}(1 + o(1))$.
		Therefore,
		$$
		\mathbb{E}[I_n]
		=
		n \cdot n^{-1} e^{-c}(1 + o(1))
		\longrightarrow
		e^{-c}.
		$$
		To identify the limiting distribution, we compute factorial moments(\cite{Bollobas-random-graphs}[Section~1.5]).
		For fixed $r \ge 1$,
		\[
		\mathbb{E}[(I_n)_r]
		=
		\sum_{(v_1,\dots,v_r)\ \text{distinct}}
		\mathbb{P}(v_1,\dots,v_r \text{ are all isolated}).
		\]
		Let $S = \{v_1,\dots,v_r\}$. The vertices in $S$ are isolated if and only if no edge intersects $S$. The number of such forbidden edges is
		$F_S = \binom{n}{k} - \binom{n-r}{k}$.
		A standard expansion gives
		$F_S
		=
		r\binom{n-1}{k-1}
		-
		\binom{r}{2}\binom{n-2}{k-2}
		+
		O(n^{k-3})$.
		Using $p_n \binom{n-1}{k-1} = \lambda_n$ and
		$\binom{n-2}{k-2}/\binom{n-1}{k-1} = O(1/n)$, we obtain
		$p_n F_S = r\lambda_n + o(1)$.
		
		Thus,
		$\mathbb{P}(S \text{ is isolated})
		=
		(1 - p_n)^{F_S}
		=
		\exp(-r\lambda_n + o(1))
		=
		n^{-r} e^{-rc}(1 + o(1)).$
		Therefore,
		$\mathbb{E}[(I_n)_r]
		=
		r!\binom{n}{r} n^{-r} e^{-rc}(1 + o(1))
		\longrightarrow
	(e^{-c})^r$.
		Thus, the factorial moments of $I_n$ converge to those of a
		$\mathrm{Poisson}(e^{-c})$ random variable, and therefore,
		\[
		I_n \xrightarrow{d} \mathrm{Poisson}(e^{-c}).
		\]
		
		\medskip
		
		\noindent
		Since $X_0 = \binom{I_n}{2}$ and the function
		$
		f(x) = \binom{x}{2} = \frac{x(x-1)}{2}
		$
		is continuous on $\mathbb{R}$, it follows from the
		continuous mapping theorem that
		\[
	X_0\xrightarrow{d} \binom{Z}{2},
		\quad
		\text{where } Z \sim \mathrm{Poisson}(e^{-c}).
		\]
		 This completes the proof.
	\end{proof}
	We now move to a sparser regime in which non-degenerate star-collisions
	appear with non-trivial probability. At the scale
	$
	\lambda_n
	=
	\tfrac{1}{2}(\log n + \log\log n) + w,
	$
	the structure of star-collisions undergoes a qualitative change.
	In the previous regime, all collisions were induced by isolated vertices,
	so that the analysis reduced to properties of the isolated vertex count.
	In contrast, star-collisions of support size $1$ now arise from genuine
	edge configurations and can no longer be reduced to vertex-level events.
	As a result, one must directly control the interaction between multiple
	such configurations. This leads to a Poisson limit for $X_1$, which we
	establish via the method of factorial moments.
	\subsubsection{Proof of Theorem~\ref{thm:r1-poisson}}
	\begin{proof}
		Let
		$
		X_1 = \sum_{u<v} I_{uv},
		$
		where $I_{uv}$ indicates that $\{u,v\}$ forms a $1$–support star collision.
		From Lemma~\ref{lem:EXr}, under
		$
		\lambda_n = \tfrac12(\log n + \log\log n) + w,
		$
		we have
		$$
		\mathbb{E}[X_1] \to \mu(w) := \frac{k-1}{4} e^{-2w}.
		$$
			We prove convergence in distribution via factorial moments.
		Fix $t \ge 1$:
		\[
		\mathbb{E}[(X_1)_t]
		=
		\sum_{(\{u_1,v_1\},\dots,\{u_t,v_t\})}
		\mathbb{P}\!\left(\prod_{i=1}^t I_{u_i v_i}=1\right),
		\]
		where the sum is over ordered $t$-tuples of distinct unordered pairs.
		Let
		$
		S = \bigcup_{i=1}^t \{u_i,v_i\}, \quad s = |S|.
		$
		
		\medskip
		\noindent
		\emph{Overlapping configurations ($s<2t$).}
		The number of such configurations is at most $C_t n^s$.
		Fix one such configuration.
		
		For each $i$, define
		$\mathcal C_{u_i v_i}
		=
		\{e : \{u_i,v_i\} \subset e\},
		\qquad
		\mathcal S_{u_i v_i}
		=
		\{e : |e \cap \{u_i,v_i\}| = 1\}.$
			Then $\prod_{i=1}^t I_{u_i v_i}=1$ occurs iff there exist exactly one edge
		$e_i \in \mathcal C_{u_i v_i}$ such that all $\{e_i:i=1,\ldots,t\}$ are present and all edges in
		$\mathcal F(e_1,\dots,e_t)
		:=
		\bigcup_{i=1}^t
		\Big(
		(\mathcal C_{u_i v_i} \setminus \{e_i\})
		\cup
		\mathcal S_{u_i v_i}
		\Big)$
		are absent.
		By independence,
		\[
		\mathbb P\!\left(\prod_{i=1}^t I_{u_i v_i}=1\right)
		=
		\sum_{e_1,\dots,e_t}
		p_n^{\,m(e_1,\dots,e_t)}
		(1-p_n)^{|\mathcal F(e_1,\dots,e_t)|},
		\]
		where $m(e_1,\dots,e_t) \le t$ is the number of distinct edges. We decompose the above sum according to the number
		$m=m(e_1,\dots,e_t)\in\{1,\dots,t\}$ of distinct witness edges.
		
		The case $m=t$ corresponds to all selected edges being distinct and will give
		the principal contribution. The cases $m<t$ are negligible, since any repeated
		edge must serve at least two pairs and therefore satisfies additional vertex
		constraints, reducing the number of admissible choices by a factor of order
		$n^{-2}$; the corresponding counting argument is identical to the one carried out below in the disjoint case.
		
		Intersections of two $\mathcal C_{u_i v_i}$-families are $O(n^{k-3})$,
		whereas intersections involving at least one $\mathcal S_{u_i v_i}$-family are $O(n^{k-2})$.
			Using inclusion–exclusion and the fact that all pairwise intersections
		of the families $\mathcal C_{u_i v_i}, \mathcal S_{u_i v_i}$ have size at most
		$O(n^{k-2})$, we obtain uniformly
		\[
		|\mathcal F(e_1,\dots,e_t)|
		=
		t\Big(\binom{n-2}{k-2} - 1 + 2\binom{n-2}{k-1}\Big)
		+
		O(t^2 n^{k-2}),
		\]
		and therefore,
		\[
		(1-p_n)^{|\mathcal F|}
		=
		\exp(-2t\lambda_n + o(1)).
		\]
			Since $|\mathcal C_{u_i v_i}| = \Theta(n^{k-2})$ and $e_i\in \mathcal C_{u_i v_i}$, it follows that
		\[
		\mathbb P\!\left(\prod_{i=1}^t I_{u_i v_i}=1\right)
		=
		O\!\left(
		n^{t(k-2)} p_n^t e^{-2t\lambda_n}
		\right)
		=
		O(n^{-2t}).
		\]
			Thus the total contribution of overlapping configurations is
		$
		O(n^s n^{-2t}) = o(1),
		$
		since $s<2t$.
		
		\medskip
		\noindent
		\emph{Disjoint configurations ($s=2t$).}
		Assume the pairs are vertex–disjoint and write $\mathcal C_i,\mathcal S_i$
		for the corresponding edge families.
		Arguing as above, for any choice of edges $e_i \in \mathcal C_i$,
		\[
		(1-p_n)^{|\mathcal F(e_1,\dots,e_t)|}
		=
		\exp(-2t\lambda_n + o(1)).
		\]
		Thus,
		\[
		\mathbb P\!\left(\prod_{i=1}^t I_{u_i v_i}=1\right)
		=
		\sum_{e_1,\dots,e_t}
		p_n^{\,m(e_1,\dots,e_t)}
		\exp(-2t\lambda_n + o(1)).
		\]
		We now separate configurations according to whether the edges
		$e_1,\dots,e_t$ are distinct.
			If all $e_i$ are distinct, then $m=t$, and the number of such choices is
		$(\binom{n-2}{k-2})^t$. Thus their contribution is
		\[
		(\binom{n-2}{k-2} p_n e^{-2\lambda_n})^t (1+o(1)).
		\]
		
		
		If some edges coincide, suppose exactly $m<t$ distinct edges are used.
		Then at least one edge contains vertices from two distinct pairs, and
		hence can be chosen in at most $\binom{n-4}{k-4}=O(n^{k-4})$ ways,
		while edges used by a single pair contribute $O(n^{k-2})$ choices.
		It follows that the number of such configurations is at most
		\[
		O\!\left(n^{m(k-2)-2}\right).
		\]
			For each such configuration, the probability contribution is at most
		$p_n^m e^{-2t\lambda_n}$. Using
		$
		p_n = \Theta\!\left(\frac{\log n}{n^{k-1}}\right)
		$
		and
		$
		e^{-2t\lambda_n}
		=
		\Theta\!\left(\frac{1}{n^t (\log n)^t}\right),
		$
		we obtain
		\[
		p_n^m e^{-2t\lambda_n}
		=
		O\!\left(
		\frac{(\log n)^m}{n^{m(k-1)}} \cdot \frac{1}{n^t (\log n)^t}
		\right)
		=
		O\!\left(
		n^{-m(k-1)-t} (\log n)^{m-t}
		\right).
		\]
		Multiplying with the number of configurations yields
		\[
		O\!\left(
		n^{m(k-2)-2}
		\cdot
		n^{-m(k-1)-t}
		(\log n)^{m-t}
		\right)
		=
		O\!\left(
		n^{-m - t - 2}
		(\log n)^{m-t}
		\right).
		\]
		Since $m \le t-1$, we have $m+t+2 \ge 2t+1$, and hence
		\[
		n^{-m - t - 2}
		(\log n)^{m-t}
		=
		o(n^{-2t}).
		\]
		Thus configurations with $m<t$ contribute negligibly.
		Therefore only configurations in which the edges $e_1,\dots,e_t$ are
		distinct contribute to the leading order. In this case, summing over
		$e_i \in \mathcal C_i$ yields
		\[
		\mathbb P\!\left(\prod_{i=1}^t I_{u_i v_i}=1\right)
		=
		\left(\binom{n-2}{k-2} p_n e^{-2\lambda_n}\right)^t (1+o(1)).
		\]
		Comparing with the single-pair estimate from \eqref{eq:pair-collision}
		(with $r=1$), we obtain
		\[
		\mathbb P\!\left(\prod_{i=1}^t I_{u_i v_i}=1\right)
		=
		\prod_{i=1}^t \mathbb P(I_{u_i v_i}=1)\,(1+o(1)).
		\]
		The number of ordered $t$-tuples of pairwise disjoint vertex pairs is
		\[
		(1+o(1))\frac{n^{2t}}{2^t}.
		\]
		so
		\[
		\sum_{\text{disjoint}}
		\mathbb P\!\left(\prod I_{u_i v_i}=1\right)
		\to \mu(w)^t.
		\]
		
		\medskip
		\noindent
		Combining both cases,
		$
		\mathbb{E}[(X_1)_t] \to \mu(w)^t,
		$
		and therefore, $X_1 \xrightarrow{d} \mathrm{Poisson}(\mu(w))$.
	\end{proof}
	We now turn to higher-order star-collisions.
	In contrast to the $r=1$ case, the behavior depends sharply on the
	support size.
	
	For $r \ge 3$, the expected number of such collisions tends to zero,
	and a direct application of Markov's inequality shows that they do not
	occur with high probability.
	
	The case $r=2$ is critical: under the scaling $\lambda_n \to \lambda \in (0,\infty)$,
	the expected number of collisions converges to a positive constant.
	We show that in this regime $X_2$ converges in distribution to a Poisson
	random variable via the method of factorial moments.
	\subsubsection{Proof of Proposition~\ref{prop:high-order-collisions}}
	\begin{proof}
		We treat the two parts separately.
		
		\medskip
		\noindent
		\emph{(ii) Case $r \ge 3$.}
		From Lemma~\ref{lem:EXr},We recall that
		\[
		\mathbb E[X_r]
		=
		\frac{(k-1)^r}{2r!}\,
		n^{2-r}\,
		\lambda_n^r
		e^{-2\lambda_n}(1+o(1)).
		\]
		Since $r \ge 3$, we have $n^{2-r} \to 0$.
		Moreover, the function $x \mapsto x^r e^{-2x}$ is bounded on $[0,\infty)$ and tends to $0$ as $x\to\infty$.
		Hence $\lambda_n^r e^{-2\lambda_n} = O(1)$ for any sequence $(\lambda_n)$.
		Therefore,
		\[
		\mathbb E[X_r] = O(n^{2-r}) \longrightarrow 0.
		\]
		By Markov's inequality,
		\[
		\mathbb{P}(X_r \ge 1)
		\le
		\mathbb{E}[X_r]
		\longrightarrow 0.
		\]
		\medskip
		\noindent
		\emph{(i) Case $r=2$.}
		Let
		$
		X_2 = \sum_{u<v} I_{uv},
		$
		where $I_{uv}$ indicates that $\{u,v\}$ forms a star-collision of support size $2$.
		From Lemma~\ref{lem:EXr}, if $\lambda_n \to \lambda \in (0,\infty)$, then
		\[
		\mathbb{E}[X_2]
		\longrightarrow
		\mu
		:=
		\frac{(k-1)^2}{4}\lambda^2 e^{-2\lambda}.
		\]
		We prove convergence in distribution via factorial moments.
		Fix $t \ge 1$:
		\[
		\mathbb{E}[(X_2)_t]
		=
		\sum_{(\{u_1,v_1\},\dots,\{u_t,v_t\})}
		\mathbb{P}\!\left(
		\prod_{i=1}^t I_{u_i v_i}=1
		\right),
		\]
		where the sum is over ordered $t$-tuples of distinct unordered vertex pairs.
			Let
		$$
		S = \bigcup_{i=1}^t \{u_i,v_i\},
		\;
		s = |S|.
		$$
		
		\medskip
		\noindent
		\emph{Overlapping configurations ($s<2t$).}
		The number of such configurations is at most $C_t n^s$.
		Fix one such configuration.
		
		Arguing as in the proof of Theorem~\ref{thm:r1-poisson},
		the joint event can be realized using at most $2t$ witness edges,
		with each witness edge chosen from a family of size
		$O(n^{k-2})$ and the absence of
		a forbidden family $\mathcal F$ satisfying
		$p_n |\mathcal F|
		=
		2t\lambda_n + o(1)$.	Hence
		\[
		\mathbb{P}\!\left(\prod_{i=1}^t I_{u_i v_i}=1\right)
		=
		O\!\left(
		n^{2t(k-2)} p_n^{2t}
		\right).
		\]
		Using $p_n \sim \lambda / n^{k-1}$, we obtain
		\[
		\mathbb{P}\!\left(\prod_{i=1}^t I_{u_i v_i}=1\right)
		=
		O(n^{-2t}).
		\]
		Thus the total contribution is
		\[
		O(n^s n^{-2t}) = o(1),
		\qquad (s<2t).
		\]
		
		\medskip
		\noindent
		\emph{Disjoint configurations ($s=2t$).}
		Assume that the pairs are vertex–disjoint.
		For each $i$, choose two edges $e_{i,1}, e_{i,2} \in \mathcal C_{u_i v_i}$.
		Arguing as above and using inclusion–exclusion, uniformly over all choices,
		\[
		p_n |\mathcal F|
		=
		2t\lambda_n + o(1),
		\qquad
		(1-p_n)^{|\mathcal F|}
		=
		e^{-2t\lambda_n + o(1)}.
		\]
		
		Configurations in which some selected witness edges coincide contribute
		only lower-order terms, since an edge containing vertices from two
		distinct pairs admits only $O(n^{k-4})$ choices.
		Summing over all choices of edges yields
		\[
		\mathbb{P}\!\left(
		\prod_{i=1}^t I_{u_i v_i}=1
		\right)
		=
		\left(\binom{n-2}{k-2}^2 p_n^2 e^{-2\lambda_n}\right)^t
		(1+o(1)).
		\]
		By the single-pair estimate, this equals
		\[
		\prod_{i=1}^t \mathbb{P}(I_{u_i v_i}=1)\,(1+o(1)).
		\]
		The number of ordered disjoint $t$-tuples is
		$
		(1+o(1))\frac{n^{2t}}{2^t},
		$
		and therefore
		\[
		\sum_{\text{disjoint}}
		\mathbb{P}\!\left(\prod I_{u_i v_i}=1\right)
		=
		(\mathbb{E}[X_2])^t + o(1).
		\]
		
		\medskip
		\noindent
		Combining both cases,
		$
		\mathbb{E}[(X_2)_t] \to \mu^t.
		$
		Hence all factorial moments converge to those of a Poisson$(\mu)$
		random variable, and therefore
		\[
		X_2 \xrightarrow{d} \mathrm{Poisson}(\mu).
		\]
		This completes the proof.
	\end{proof}
	We now turn to the global structure of non-trivial units.
	While Theorem~\ref{thm:r1-poisson} describes the distribution of
	pairwise star-collisions, it remains to understand whether larger
	clusters of vertices with identical stars can occur.
	
	The next result shows that, at the critical scaling,
	such higher-order coincidences are suppressed with high probability.
	Consequently, all non-trivial units are essentially generated by
	pairwise collisions, and the total number of non-trivial units inherits
	the Poisson limit from $X_1$.
	\subsubsection{Proof of Theorem~\ref{thm:units-structure}}
\begin{proof}
	Let
	$
	T_n := \sum_{u<v<w}
	\mathbf{1}\{\Gamma(u)=\Gamma(v)=\Gamma(w)\neq \emptyset\}
	$
	denote the number of triples of vertices having identical non-empty stars.
	If there exists a unit of size at least $3$, then there exist distinct
	vertices $u,v,w$ such that
	$
	\Gamma(u)=\Gamma(v)=\Gamma(w)\neq\emptyset.
	$
	Thus,
	$
	\{U_{\ge 3} > 0\} \subseteq \{T_n > 0\},
	$
	where $U_{\ge3} := \sum_{m\ge3} U_m$. Consequently, by Markov’s inequality,
	\[
	\mathbb{P}(U_{\ge3} > 0)
	\le
	\mathbb{P}(T_n > 0)
	\le
	\mathbb{E}[T_n].
	\]
	
Fix distinct vertices $u,v,w$. If
$
\Gamma(u)=\Gamma(v)=\Gamma(w)\neq\emptyset,
$
then every edge in the common star must contain the triple
$\{u,v,w\}$, while every edge containing exactly one or two of
$u,v,w$ must be absent.

Thus, for a fixed integer $r\ge1$, the probability that the common star has
	size exactly $r$ is
	$
	\binom{\binom{n-3}{k-3}}{r}
	p_n^r
	(1-p_n)^{3\binom{n-1}{k-1}+O(n^{k-2})}.
	$
	Summing over all $r\ge1$, we obtain
	\[
	\mathbb{P}\bigl(
	\Gamma(u)=\Gamma(v)=\Gamma(w)\neq\emptyset
	\bigr)
	=
	(1-p_n)^{3\binom{n-1}{k-1}+O(n^{k-2})}
	\sum_{r\ge1}
	\binom{\binom{n-3}{k-3}}{r}p_n^r.
	\]
	Using the binomial theorem,
	\[
	\sum_{r\ge1}
	\binom{\binom{n-3}{k-3}}{r}p_n^r
	=
	(1+p_n)^{\binom{n-3}{k-3}}-1.
	\]
	Since
	\[
	p_n\binom{n-3}{k-3}
	=
	\lambda_n
	\frac{\binom{n-3}{k-3}}{\binom{n-1}{k-1}}
	=
	O\!\left(\frac{\lambda_n}{n^2}\right)
	=
	o(1),
	\]
	we have
	$
	(1+p_n)^{\binom{n-3}{k-3}}-1
	=
	p_n\binom{n-3}{k-3}(1+o(1)).
	$
	Therefore,
	\[
	\mathbb{P}\bigl(
	\Gamma(u)=\Gamma(v)=\Gamma(w)\neq\emptyset
	\bigr)
	=
	p_n\binom{n-3}{k-3}
	e^{-3\lambda_n+o(1)}.
	\]
	Summing over all triples $\{u,v,w\}$ yields
	\[
	\mathbb{E}[T_n]
	=
	\binom{n}{3}
	\binom{n-3}{k-3}
	p_n
	e^{-3\lambda_n+o(1)}.
	\]
	Using
	\[
	\binom{n}{3}
	\binom{n-3}{k-3}
	p_n
	=
	\frac{n(n-1)(n-2)}{6}
	\cdot
	\frac{(n-3)!}{(k-3)!(n-k)!}
	\cdot p_n
	=
	O(n\lambda_n),
	\]
	we obtain
	$\mathbb{E}[T_n]
	=
	O(n\lambda_n e^{-3\lambda_n})$.
	Under the assumption
	$
	\lambda_n
	=
	\tfrac12(\log n+\log\log n)+w,
	$
	we have
	$
	e^{-3\lambda_n}
	=
	n^{-3/2}
	(\log n)^{-3/2}
	e^{-3w},
	$
	and hence
	$$
	n\lambda_n e^{-3\lambda_n}
	=
	O\!\left(
	n^{-1/2}
	(\log n)^{-1/2}
	\right)
	\to0.
	$$
	Therefore,
	$
	\mathbb{E}[T_n]\to0,
	$
	and consequently
	$
	\mathbb{P}(U_{\ge3}>0)\to0,
	$
	which proves part~(i).
	
	\medskip
	\noindent
	For part~(ii), by part~(i), with high probability all non-trivial units
	are of size $2$. In this case there is a one-to-one correspondence between
	non-trivial units and star-collisions of support size $1$, and hence
	$
	Y = X_1
	$
	with probability tending to $1$.
	From Theorem~\ref{thm:r1-poisson}, we have
	\[
	X_1 \xrightarrow{d}
	\mathrm{Poisson}\!\left(
	\frac{k-1}{4}e^{-2w}
	\right).
	\]
	Therefore,
	\[
	Y \xrightarrow{d}
	\mathrm{Poisson}\!\left(
	\frac{k-1}{4}e^{-2w}
	\right),
	\]
	which completes the proof.
\end{proof}
	\subsection{Proofs for the results on the Linear algebraic and dynamical consequence section}
Here we are going to use the already established results, to prove the result stated in the Section~\ref{sec-appl}.
	\subsubsection{Proof of spectral results}
	\label{pf:spectral-results}
	
	We now prove the spectral consequences stated in Section~\ref{sec:application-spec}. 
	The key input is the structural decomposition of star-dependent matrices recalled in 
	Fact~\ref{thm:finite_decomposition}, together with the probabilistic description of 
	non-trivial units obtained in Theorem~\ref{thm:units-structure}. 
	These results allow us to identify the local spectral component as a random finite-dimensional perturbation, whose size is governed by the unit structure.
	
	\begin{proof}[Proof of Theorem~\ref{thm:spectral-poisson}]
		Each non-trivial unit $W$ contributes a local eigenvalue
		$
		\lambda_W = m_{uu}-m_{uv},
		\quad u\neq v\in W,
		$
		with multiplicity $|W|-1$
		(Fact~\ref{fact-eigen-unit}).
		Thus, the spectrum of $M$ consists of the spectrum of the
		unit contraction $\widehat M$ together with a random finite
		multiset of local eigenvalues whose total multiplicity equals
		$\dim(\mathcal H_{\mathrm{loc}})$.
		Recall that from Fact~\ref{thm:finite_decomposition} that
		$\mathcal H_{\mathrm{loc}} = \bigoplus_{W} S_W$,
		where the sum runs over all non-trivial units $W$, and $\dim(S_W)=|W|-1$.
		Therefore, the proof of part-(i) follows from the Corollary~\ref{cor:units-trivial}.
		For the proof of part-(ii), we use
		$\dim(\mathcal H_{\mathrm{loc}})=\sum_W (|W|-1)$.
		In the regime
		$\lambda_n = \tfrac{1}{2}(\log n + \log\log n) + w$,
		Theorem~\ref{thm:units-structure} implies that the number of non-trivial units converges in distribution to
		$\mathrm{Poisson}\!\left(\frac{k-1}{4}e^{-2w}\right)$,
		and that, with high probability, all such units have size $2$.
		Therefore,
		$\dim(\mathcal H_{\mathrm{loc}})
		=
		|\mathcal W|
		\quad \text{with high probability}$,
		and thus
		\[
		\dim(\mathcal H_{\mathrm{loc}})
		\xrightarrow{d}
		\mathrm{Poisson}\!\left(\frac{k-1}{4}e^{-2w}\right).
		\]
		Each non-trivial unit $W$ contributes an eigenvalue of multiplicity $|W|-1$ 
		(Fact~\ref{fact-eigen-unit}). Thus, the spectrum of $M$ consists of the spectrum of the unit contraction $\widehat M$ together with a random finite multiset of additional eigenvalues whose total multiplicity equals $\dim(\mathcal H_{\mathrm{loc}})$.
	\end{proof}
\begin{proof}[Proof of Corollary~\ref{cor:esd-stability}]
	Let
	$
	R_n := M_n|_{\mathcal H_{\mathrm{glob}}},
	\;
	S_n := M_n|_{\mathcal H_{\mathrm{loc}}},$
	and let
   $
	m_n := \dim(\mathcal H_{\mathrm{loc}}).
	$
	Recall the invariant decomposition
	$
	\mathbb{R}^{V(H_n)}
	=
	\mathcal H_{\mathrm{glob}}
	\oplus
	\mathcal H_{\mathrm{loc}}.
	$
	Since both subspaces are $M_n$–invariant, the spectrum of $M_n$
	is the disjoint union of the spectra of $R_n$ and $S_n$. Hence
	the empirical spectral distribution satisfies
	$$
	\mu^{M_n}
	=
	\frac{n-m_n}{n}\mu^{R_n}
	+
	\frac{m_n}{n}\mu^{S_n}.
	$$
	
	By Theorem~\ref{thm:spectral-poisson},
	$m_n$ converges in distribution to a Poisson random variable.
	In particular,
	$
	\frac{m_n}{n}
	\xrightarrow{\mathbb P}
	0.
	$
	Therefore, for every bounded continuous function
	$f:\mathbb R\to\mathbb R$,
	\[
	\left|
	\int f\,d\mu^{M_n}
	-
	\int f\,d\mu^{R_n}
	\right|
	\le
	2\|f\|_\infty \frac{m_n}{n}
	\xrightarrow{\mathbb P}
	0.
	\]
	Thus $\mu^{M_n}$ and $\mu^{R_n}$ have the same weak limit points.
	
	It remains to verify tightness. Since $M_n$ is self-adjoint,
	\[
	\int x^2\, d\mu^{M_n}(x)
	=
	\frac1n \operatorname{Tr}(M_n^2)
	=
	\frac1n \operatorname{Tr}(M_n M_n^*).
	\]
	By assumption,
    $$
	\sup_n
	\mathbb E\!\left[
	\int x^2\, d\mu^{M_n}(x)
	\right]
	<
	\infty.
	$$
	Hence, for every $R>0$,
	\[
	\mathbb E\!\left[\mu^{M_n}(|x|>R)\right]
	\le
	\frac1{R^2}
	\mathbb E\!\left[
	\int x^2\, d\mu^{M_n}(x)
	\right]
	=
	O(R^{-2}),
	\]
	uniformly in $n$. Therefore the sequence
	$\{\mu^{M_n}\}$ is tight in probability, and consequently
	$\{\mu^{R_n}\}$ is also tight, since the two sequences differ by
	a vanishing mass.
	
	It follows that $\mu^{M_n}$ converges weakly if and only if
	$\mu^{R_n}$ converges weakly, and in that case the limits coincide.
\end{proof}
	\subsubsection{Proof of Theorem~\ref{thm:asymptotic_capture_dynamics}}
	
	Fix $n\ge1$ and condition on a realization of $H_n$.
	
	For any hypergraph $H$ and any star-dependent matrix $M$, 
	Fact~\ref{thm:finite_decomposition} yields an invariant decomposition
	$\mathbb R^{V(H)} = \mathcal H_{\mathrm{glob}} \oplus \mathcal H_{\mathrm{loc}}$,
	where both $\mathcal H_{\mathrm{glob}}$ and $\mathcal H_{\mathrm{loc}}$ are invariant under $M$.
	
	Let $\mathcal V \subseteq \mathbb R^{V(H)}$ be any $M$-invariant subspace.
	Since $\mathcal V$, $\mathcal H_{\mathrm{glob}}$, and $\mathcal H_{\mathrm{loc}}$ 
	are invariant, it follows that
	\[
	\mathcal V_{\mathrm{glob}} := \mathcal V \cap \mathcal H_{\mathrm{glob}},
	\qquad
	\mathcal V_{\mathrm{loc}} := \mathcal V \cap \mathcal H_{\mathrm{loc}}
	\]
	are also invariant subspaces. Moreover, by the direct sum decomposition,
	\[
	\mathcal V = \mathcal V_{\mathrm{glob}} \oplus \mathcal V_{\mathrm{loc}}.
	\]
	By construction, $\mathcal V_{\mathrm{glob}} \subseteq \mathcal H_{\mathrm{glob}}$,
	and the restriction of $M$ to $\mathcal H_{\mathrm{glob}}$ is unitarily equivalent
	to the unit contraction $\widehat M$. Hence $\mathcal V_{\mathrm{glob}}$
	is the lift of a $\widehat M$-invariant subspace.
	
	\medskip
	
	We now incorporate the probabilistic structure of $H_n$.
	
	\smallskip
	
	\noindent
	\emph{(i) Regime $\lambda_n=\log n + c$.}
	By Corollary~\ref{cor:units-trivial}, with high probability all units are trivial,
	and hence $\mathcal H_{\mathrm{loc}}=\{0\}$.
	On this event, every invariant subspace satisfies
	$\mathcal V = \mathcal V_{\mathrm{glob}}$, and therefore
	\[
	\mathcal I(M_n)
	=
	\mathrm{Lift}\bigl(\mathcal I(\widehat M_n)\bigr).
	\]
	This proves the first part.
	
	\smallskip
	
	\noindent
	\emph{(ii) Regime $\lambda_n=\tfrac{1}{2}(\log n+\log\log n)+w$.}
	From Theorem~\ref{thm:units-structure}, the number of non-trivial units
	converges in distribution to a Poisson random variable, and with high probability
	all such units have size $2$.
	Let $N_2$ denote the number of units of size $2$, and define
	\[
	R_n
	:=
	\sum_{|W|\ge 3} (|W|-1).
	\]
	Then
	$\dim(\mathcal H_{\mathrm{loc}})
	=
	N_2+R_n$.
	By Theorem~\ref{thm:units-structure},
	$
	N_2
	\Rightarrow
	\mathrm{Poisson}\!\left(\frac{k-1}{4}e^{-2w}\right).
	$
	Moreover, with high probability there exists no unit of size at least $3$. Hence, with probability tending to $1$, the sum defining $R_n$ is empty, and therefore
$	R_n=0$.
	Consequently,
	$R_n\xrightarrow{\mathbb P}0$.
	Therefore, by Slutsky's theorem \cite{van2000asymptotic},
	\[
	\dim(\mathcal H_{\mathrm{loc}})
	=
	N_2+R_n
	\Rightarrow
	\mathrm{Poisson}\!\left(\frac{k-1}{4}e^{-2w}\right).
	\]

	Since $\dim(\mathcal H_{\mathrm{loc}}) \xrightarrow{d} \text{Poisson}(\mu)$, the sequence $\{\dim(\mathcal H_{\mathrm{loc}})\}$ is tight (or stochastically bounded)\cite[Section-5]{billingsley2013convergence}. Formally, a sequence of probability measures is tight if no probability mass ``escapes to infinity" in the limit. In our case, where the random variables take values in $\mathbb{R}_{\geq 0}$, this implies that for every $\varepsilon > 0$, there exists a compact set $K = [0, M]$ such that the probability of the sequence falling outside $K$ is arbitrarily small.The statement is as follows: For every $\varepsilon > 0$, there exists a constant $M > 0$ such that $\limsup_{n \to \infty} \mathbb{P} \big( \dim(\mathcal H_{\mathrm{loc}}) > M \big) < \varepsilon$.

	Since $\mathcal V_{\mathrm{loc}} \subseteq \mathcal H_{\mathrm{loc}}$, we have
	$\mathbb P\!\big(\dim(\mathcal V_{\mathrm{loc}})>M\big)
	\le
	\mathbb P\!\big(\dim(\mathcal H_{\mathrm{loc}})>M\big)$,
	and therefore the same tightness bound holds for $\dim(\mathcal V_{\mathrm{loc}})$.
	Hence the result.
	\qed
	\begin{proof}[Proof of Corollary~\ref{cor:unit-synchronization}]
		The key observation is that the dynamics generated by $M_n$ preserves unit-synchronization, in the sense that if an initial state is constant on each unit, then this property is preserved under iteration. 
		
		In the regime $\lambda_n=\log n+c$, by Theorem~\ref{thm:asymptotic_capture_dynamics}, with high probability all units are trivial, so no non-trivial synchronization clusters exist and $\mathcal H_{\mathrm{loc}}=\{0\}$.
		
		In the regime $\lambda_n=\tfrac{1}{2}(\log n+\log\log n)+w$, non-trivial units appear only in finite number and are asymptotically governed by a Poisson law. Therefore, the number of non-trivial units capable of supporting
		unit-synchronized states converges in distribution to
		$
		\mathrm{Poisson}\!\left(
		\frac{k-1}{4}e^{-2w}
		\right).
		$
	\end{proof}
	\begin{proof}[Proof of Proposition~\ref{thm:asymptotic_fingerprinting}]
		By Corollary~\ref{cor:units-trivial}, if
		$\lambda_n=\log n+c$,
		then with high probability every unit of $H_n$ is a singleton.
		Equivalently, distinct vertices have distinct stars.
	\end{proof}

\end{document}